\def\E{\ifmmode{\mathbb E}\else{$\mathbb E$}\fi} 
\def\N{\ifmmode{\mathbb N}\else{$\mathbb N$}\fi} 
\def\R{\ifmmode{\mathbb R}\else{$\mathbb R$}\fi} 
\def\Q{\ifmmode{\mathbb Q}\else{$\mathbb Q$}\fi} 
\def\C{\ifmmode{\mathbb C}\else{$\mathbb C$}\fi} 
\def\H{\ifmmode{\mathbb H}\else{$\mathbb H$}\fi} 
\def\Z{\ifmmode{\mathbb Z}\else{$\mathbb Z$}\fi} 
\def\P{\ifmmode{\mathbb P}\else{$\mathbb P$}\fi} 
\def\T{\ifmmode{\mathbb T}\else{$\mathbb T$}\fi} 
\def\SS{\ifmmode{\mathbb S}\else{$\mathbb S$}\fi} 
\def\DD{\ifmmode{\mathbb D}\else{$\mathbb D$}\fi} 
\renewcommand{\b}{\beta}
\renewcommand{\o}{\omega}
\newcommand{\del}{\partial}
\newcommand{\ben}{\begin{enumerate}}
\newcommand{\een}{\end{enumerate}}
\newcommand{\be}{\begin{equation}}
\newcommand{\ee}{\end{equation}}
\newcommand{\bea}{\begin{eqnarray}}
\newcommand{\eea}{\end{eqnarray}}
\newcommand{\beastar}{\begin{eqnarray*}}
\newcommand{\eeastar}{\end{eqnarray*}}
\newcommand{\bc}{\begin{center}}
\newcommand{\ec}{\end{center}}
\theoremstyle{theorem}
\newtheorem{thm}{Theorem}[section]
\newtheorem{cor}[thm]{Corollary}
\newtheorem{lem}[thm]{Lemma}
\newtheorem{prop}[thm]{Proposition}
\theoremstyle{definition}
\newtheorem{defn}[thm]{Definition}
\newtheorem{rem}[thm]{Remark}
\newtheorem*{thm*}{Theorem}
\numberwithin{equation}{section}
\def\R{{\mathbb R}}
\def\Crit{{\hbox{Crit}}}
\def\E{{\mathbb E}}
\def\Z{{\mathbb Z}}
\def\C{{\mathbb C}}
\def\R{{\mathbb R}}
\def\P{{\mathbb P}}
\def\N{{\mathbb N}}
\def\11{{\mathbb I}}
\def\delbar{{\overline \partial}}
\def\C{\mathbb{C}}
\def\Z{\mathbb{Z}}
\def\T{\mathbb{T}}
\def\Q{\mathbb{Q}}
\def\E{\ifmmode{\mathbb E}\else{$\mathbb E$}\fi} 
\def\N{\ifmmode{\mathbb N}\else{$\mathbb N$}\fi} 
\def\R{\ifmmode{\mathbb R}\else{$\mathbb R$}\fi} 
\def\Q{\ifmmode{\mathbb Q}\else{$\mathbb Q$}\fi} 
\def\C{\ifmmode{\mathbb C}\else{$\mathbb C$}\fi} 
\def\H{\ifmmode{\mathbb H}\else{$\mathbb H$}\fi} 
\def\Z{\ifmmode{\mathbb Z}\else{$\mathbb Z$}\fi} 
\def\P{\ifmmode{\mathbb P}\else{$\mathbb P$}\fi} 
\def\SS{\ifmmode{\mathbb S}\else{$\mathbb S$}\fi} 
\def\DD{\ifmmode{\mathbb D}\else{$\mathbb D$}\fi} 
\def\R{{\mathbb R}}
\def\Crit{{\hbox{Crit}}}
\def\E{{\mathbb E}}
\def\Z{{\mathbb Z}}
\def\C{{\mathbb C}}
\def\R{{\mathbb R}}
\def\N{{\mathbb N}}
\def\MM{{\mathcal M}}
\def\JJ{{\mathcal J}}
\def\delbar{{\overline \partial}}
\def\b{\beta}
\def\o{\omega}  
  \def\S{\Sigma}
\def\CB{{\mathcal B}}
\def\CF{{\mathcal F}}
\def\CH{{\mathcal H}}
\def\CJ{{\mathcal J}}
\def\CM{{\mathcal M}}
\def\darr#1{\raise1.5ex\hbox{$\leftrightarrow$}
\mkern-16.5mu #1}
\def\roughly#1{\raise.3ex\hbox{$#1$\kern-.75em
\lower1ex\hbox{$\sim$}}}
\def\opname#1{\mathop{\kern0pt{\rm #1}}\nolimits}
\def\dim{\opname{dim}}
\def\supp{\operatorname{supp}}
\def\Aut{\operatorname{Aut}}
\def\Conf{\operatorname{Conf}}
\def\ram{\operatorname{ram}}
\def\Crit{\operatorname{Crit}}
\begin{document}
\quad \vskip1.375truein

\def\mq{\mathfrak{q}}
\def\mp{\mathfrak{p}}
\def\mH{\mathfrak{H}}
\def\mh{\mathfrak{h}}
\def\ma{\mathfrak{a}}
\def\ms{\mathfrak{s}}
\def\mm{\mathfrak{m}}
\def\mn{\mathfrak{n}}
\def\mz{\mathfrak{z}}
\def\mw{\mathfrak{w}}
\def\Hoch{{\tt Hoch}}
\def\mt{\mathfrak{t}}
\def\ml{\mathfrak{l}}
\def\mT{\mathfrak{T}}
\def\mL{\mathfrak{L}}
\def\mg{\mathfrak{g}}
\def\md{\mathfrak{d}}
\def\mr{\mathfrak{r}}

\title[Higher jet evaluation transversality]
{Higher jet evaluation transversality of $J$-holomorphic curves}

\author{Yong-Geun Oh}
\address{Department of Mathematics, University of Wisconsin, Madison, WI, 53706}
\email{oh@math.wisc.edu}
\thanks{Partially supported by the NSF grant \#DMS 0503954}

\begin{abstract}
In this paper, we establish general stratawise higher jet evaluation
transversality of $J$-holomorphic curves for a generic choice of
almost complex structures $J$ (tame to a given symplectic manifold
$(M,\omega)$).

Using this transversality result, we prove that there exists a subset
$\CJ_\omega^{ram} \subset \CJ_\omega$ of second category such that
for every $J \in \CJ_\omega^{ram}$, the dimension of the
moduli space of (somewhere injective) $J$-holomorphic curves with
a given ramification profile goes down by $2n$ or $2(n-1)$ depending on
whether the ramification degree goes up by one or a new ramification
point is created.

We also derive that for each $J \in \CJ_\omega^{ram}$ there are only a finite number of
ramification profiles of $J$-holomorphic curves in a given
homology class $\beta \in H_2(M;\Z)$ and provide an
explicit upper bound on the number of ramification profiles in terms of
$c_1(\beta)$ and the genus $g$ of the domain surface.
\end{abstract}

\keywords{higher jet evaluation transversality, holomorphic jets,
ramification profiles, distributions with points support}
\date{April 16, 2009}

\subjclass[2000]{Primary 53D35; 14H10}

\maketitle

\tableofcontents

\section{Introduction}
\label{sec:intro}

Let $(M,\omega)$ be a symplectic manifold of dimension $2n$. Denote by $J$
an almost complex structure tame to $\omega$ and by $\CJ_\omega$ the set of
tame almost complex structures.

Let $\Sigma$ be an oriented compact surface without boundary of genus
$g$ and $(j,u)$ be a pair of a complex structure $j$ on $\Sigma$ and
a map $u: \Sigma \to M$. We say that $(j,u)$ is a $J$-holomorphic if
it satisfies $J \circ du = du \circ j$. We denote the standard moduli
spaces of $J$-holomorphic maps $(j,u)$ from $\Sigma$ to $M$ in class
$[u] = \beta$ by $\widetilde \CM_g(M,J;\beta)$ and consider
its quotient $\CM_g(M,J;\beta) = \widetilde
\CM_g(M,J;\beta)/\operatorname{Aut}(\Sigma)$.

The main purpose of the present paper is to establish higher jet evaluation
transversality whose precise formulation we refer to section \ref{sec:stratawise}.

From this higher jet evaluation transversality, we derive
stratawise transversality of ramification divisors whose statement is
now in order.

\begin{defn} The \emph{ramification degree} of the map $u$ at a point $z \in \Sigma$ is
defined to be the unique integer $k \in \N$ such that
$$
j^ku(z) = 0, \quad \mbox{but }\,  j^{k+1} u (z) \neq 0
$$
where $j^ku(z)$ is the $k$-jet of the map $u$ at $z$. If there is no such $k$, we say
$u$ has an infinite ramification degree. We say that any immersed
point has ramification degree 0.
\end{defn}

Basic results from \cite{mcduff}, \cite{sikorav} on the structure of singularities
of $J$-holomorphic map $(j,u)$ state that there are only finitely many
critical points and that each critical point has a finite ramification
degree. This motivates us to consider the set of pairs
$$
(k; \vec n),  \, \; \, k \in \N \, \mbox{and }\, \vec n \in \N^k.
$$
For each given $k$ distinct points
$z = \{z_1, \cdots, z_k\}$, we consider the decoration of positive
integers $n_i$ assigned at $z_i$'s. We denote $\vec n = \{n_1, \cdots, n_k\}$
and $K = \{1, \cdots, k\}$. For given $k \leq k'$ and $K' = \{1, \cdots, k'\}$
we decompose
$$
K' = K \cup (K' \setminus K).
$$
\begin{defn} We say $(k';\vec n') < (k; \vec n)$ if
$$
k \leq k' \, \mbox{ and } \quad n_i \leq n'_i\, \mbox{ for all } \,
i \in K \subset K'.
$$
\end{defn}

Now for each given non-constant $J$-holomorphic map $(j,u)$, we associate to it
the \emph{ramification profile} given by the vector
\be\label{eq:profile}
\vec n \in \coprod_{m\in \N} \N^m.
\ee
When $\vec n \in \N^k$, we also denote it by $(k;\vec n)$.
Denote by $\ram(u)$ the ramification profile of $u$ and
by $\deg_{z_i}(u)$ the ramification degree of $u$ at $z_i$.

We define the corresponding moduli space of $J$-holomorphic maps with
prescribed ramifications at $k$ marked points
$$
\widetilde\CM_{g,k}(M,J;\beta,\vec n) = \{ ((j,u),z) \mid \delbar_{(j,J)}u = 0, \,
\deg_{z_i}(u) \geq n_i \}
$$
and
$$
\CM_{g,k}(M,J;\beta,\vec n) = \widetilde\CM_{g,k}(M,J;\beta,\vec n)/\mbox{Aut}(\Sigma).
$$
We emphasize, though, that \emph{an element $u$ from this moduli space could have other
ramification points unmarked}.
\medskip

\noindent{\bf Notation :} {\it Throughout the paper, we will abuse our
notation and always denote by $\MM_{g,k}(M,J;\beta)$ the open subset
consisting of somewhere injective $J$-holomorphic curves $(j,u)$
in the standard smooth moduli space which is usually denoted by
the notation $\MM_{g,k}(M,J;\beta)$ itself. Similar remarks will apply to all other moduli spaces.}
\medskip

The following is one of the main theorems we prove in the present paper.

\begin{thm} There exists a subset $\JJ_\omega^{ram} \subset \JJ_\omega$
such that for any $J \in \JJ_\omega^{ram}$ the moduli space
$\widetilde \CM_{g,k}(M,J;\beta;\vec n)$ is a smooth manifold
of dimension
$$
\dim \widetilde \CM_{g,k}(M,J;\beta) - \sum_{i=1}^k 2n n_i
$$
for all $\beta \in H_2(M)$ and $g \in \Z_{\geq 0}$.
\end{thm}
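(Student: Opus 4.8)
The plan is to present $\widetilde\CM_{g,k}(M,J;\beta;\vec n)$ as the transverse preimage of a subbundle under a higher jet evaluation section, and to read off its dimension from the codimension of that subbundle; the entire analytic weight is carried by the stratawise higher jet evaluation transversality established in section~\ref{sec:stratawise}, which I take as given.

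First I would recall the space of holomorphic jets. Fix $\ell = \max_{1\le i\le k} n_i$. Over the (second countable, smooth) Banach manifold $\widetilde\CM_{g,k}(M,J;\beta)$ there is a smooth finite-rank real vector bundle $\widehat\EJ^{\ell}$ whose fiber over $((j,u),z)$ is $\bigoplus_{i=1}^{k}\EJ^{\ell}_{z_i}$, where $\EJ^{\ell}_{z_i}$ is the space of $\ell$-jets at $z_i$ of local $(j,J)$-holomorphic maps into $M$ through $u(z_i)$. In a $j$-holomorphic coordinate centered at $z_i$ and a suitable chart near $u(z_i)$ such a jet is recorded by Taylor-type coefficients $(a_1,\dots,a_\ell)$, $a_m\in\C^{n}$, so $\widehat\EJ^{\ell}$ has real rank $2nk\ell$; elliptic regularity makes the tautological $\ell$-jet evaluation $\operatorname{ev}^{(\ell)}\colon ((j,u),z)\mapsto (j^{\ell}u(z_1),\dots,j^{\ell}u(z_k))$ a smooth section of $\widehat\EJ^{\ell}$. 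By the definition of the ramification degree, $\deg_{z_i}(u)\ge n_i$ is equivalent to $j^{n_i}u(z_i)=0$, i.e. $a_1=\dots=a_{n_i}=0$; this cuts out a linear subbundle $\widehat\CZ \subset \widehat\EJ^{\ell}$, with fiber $\bigoplus_{i=1}^{k}\CZ_{n_i}$ and $\operatorname{codim}\CZ_{n_i} = 2n\,n_i$, and
\[
\widetilde\CM_{g,k}(M,J;\beta;\vec n) \;=\; \big(\operatorname{ev}^{(\ell)}\big)^{-1}\!\big(\widehat\CZ\big).
\]

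Next I would invoke section~\ref{sec:stratawise}: it produces a second-category subset $\JJ_\omega^{ram}\subset\JJ_\omega$ such that for every $J\in\JJ_\omega^{ram}$ the section $\operatorname{ev}^{(\ell)}$ is transverse to each stratum of the ramification stratification of $\widehat\EJ^{\ell}$, in particular to $\widehat\CZ$. Since $H_2(M;\Z)$ is countable and $(g,k,\vec n,\ell)$ ranges over a countable set, one intersects the countably many residual sets so produced and still obtains a single residual $\JJ_\omega^{ram}$ valid for all $\beta$, $g$, $k$, $\vec n$ at once (the base $\widetilde\CM_{g,k}(M,J;\beta)$ being a smooth manifold for $J$ in this set, by the standard transversality for somewhere injective curves). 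Then, for a fixed such $J$, the transversal preimage theorem on Banach manifolds gives that $\widetilde\CM_{g,k}(M,J;\beta;\vec n)$ is a smooth manifold with
\[
\operatorname{codim}\Big(\widetilde\CM_{g,k}(M,J;\beta;\vec n)\subset \widetilde\CM_{g,k}(M,J;\beta)\Big) \;=\; \operatorname{rank}\big(\widehat\EJ^{\ell}/\widehat\CZ\big) \;=\; \sum_{i=1}^{k} 2n\,n_i ,
\]
which is exactly the claimed dimension.

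The real obstacle is internal to section~\ref{sec:stratawise} and is used here as a black box: constructing, from a somewhere injective $J$-holomorphic curve, admissible variations of $J$ supported near $u(z_i)$ but away from the remaining branches and double points of $u$ which realize an arbitrarily prescribed change of the $\ell$-jet at $z_i$ --- this is where somewhere injectivity is indispensable and where the description of holomorphic jets via distributions with point support is used --- together with the Sard--Smale argument descending from the universal moduli space to a generic fixed $J$. Granting that, the remaining points are routine: verifying that the space of holomorphic $\ell$-jets is a smooth bundle of the stated rank, that $\widehat\CZ$ is a genuine subbundle of codimension $\sum_i 2n\,n_i$, and that the countable intersection of residual sets is residual, after which transversality delivers the dimension formula.
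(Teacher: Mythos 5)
Your plan is essentially the paper's own argument: the theorem is obtained by exhibiting $\widetilde\CM_{g,k}(M,J;\beta;\vec n)$ as the zero set of a holomorphic-jet evaluation section, invoking the surjectivity of the linearization of the combined section $\Upsilon_k^{\vec n}=(\delbar,j^{n_i}_{hol})$ over the universal space $\CJ_\omega\times\CF_k(M;\beta)$ (the content of Theorem \ref{thm:main}, whose proof via distributions with point support you correctly identify as the real work and take as a black box), then applying Sard--Smale and intersecting the countably many residual sets $\CJ_\omega^{g,k,\vec n}$ over $(g,k,\vec n)$ to define $\CJ_\omega^{ram}$, with the codimension $\sum_i 2n\,n_i$ read off from the rank of the holomorphic jet bundle exactly as in the paper's Corollary following Theorem \ref{thm:main}. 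The only cosmetic difference is that you phrase the cut-out condition as the preimage of a subbundle under a single $\ell$-jet evaluation with $\ell=\max_i n_i$ rather than as the zero section of $\prod_i J^{n_i}_{hol}$, which changes nothing in the dimension count.
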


Recall the standard dimension formula for

\be\label{eq:index} \dim \widetilde \CM_{g,k}(M,J;\beta) =
\begin{cases} 2(c_1(M,\omega)(\beta) + (n-3)(1-g)) + 2k \quad & \mbox{for }\, g\geq 2 \\
2(c_1(M,\omega)(\beta) + 1) + 2k \quad & \mbox{for }\, g = 1 \\
2(c_1(M,\omega)(\beta) + n) + 2k \quad & \mbox{for }\, g = 0
\end{cases}
\ee for the maps $u$ with $[u] = \beta\in H_2(\Z)$,

Our proof of this theorem relies on a new Fredholm set-up we
establish in this paper using the notion of \emph{holomorphic jet
bundles}. Using this Fredholm work and some judicious usage of a
structure theorem of distributions with point support (see \cite{gelfand}
for example), we prove a higher jet evaluation transversality which uses an extension of
the scheme of the 1-jet transversality proof employed by Zhu and the present
author in \cite{oh-zhu}. An important point used in our proof is the fact
that the \emph{holomorphic jet bundles} are canonically associated to
the pair of a Riemann surface $(\Sigma,j)$ and an almost complex manifold $(M,J)$
in the `off-shell' level, i.e. on the space of smooth maps, not just on the
moduli space of $J$-holomorphic maps.

A priori, $\widetilde \CM_{g,k}(M,J;\beta;\vec n)$'s are abstract manifolds
residing independently from one another. The following theorem
relates them when the corresponding ramification orders are
right next to each other.

We have two kinds of immediate predecessors $(k';\vec n')$ to $(k;\vec n)$ :
\begin{enumerate}
\item[(a)] $(k'; \vec n') = (k; \vec n + \vec e_\ell)$ for some $1 \leq \ell \leq k$
where we denote by $\vec n + \vec e_\ell$ the decoration
$$
(n_1, \cdots, n_\ell + 1, \cdots, n_k).
$$
\item[(b)] $(k'; \vec n') = (k+1, \vec n \cup \{n_{k+1}\})$
with $n_{k+1} = 1$.
\end{enumerate}

\begin{thm} For $J \in \JJ_\omega^{ram}$ and $\beta \in H_2(M)$ and $g \in \Z_{\geq 0}$,
the following holds :
\begin{enumerate}
\item For the type $(a)$ of the immediate predecessor of $(k';\vec n')
= (k,\vec n + \vec e_\ell)$ for some $\ell = 1, \cdots, k$, $
\MM_{g,k}(J;\beta, \vec n + \vec e_\ell)$ is a smooth submanifold of
$\MM_{g,k}(J;\beta, \vec n)$ with its dimension $2n$ smaller,
\item For the type $(b)$, the image of the forgetful map
$\MM_{g,k+1}(J;\beta,\vec n + \vec e_{k+1}) \to \MM_{g,k}(J;\beta,\vec n)$
induces an embedding of codimension $2(n-1)$.
\end{enumerate}
\end{thm}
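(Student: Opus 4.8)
The plan is to deduce Theorem 1.7 from the higher jet evaluation transversality (equivalently from Theorem 1.5) by a careful comparison of the defining jet conditions for consecutive ramification profiles. The starting point is the observation that $\widetilde\CM_{g,k}(M,J;\beta,\vec n)$ is cut out inside $\widetilde\CM_{g,k}(M,J;\beta)$ by the vanishing of the holomorphic $n_i$-jet of $u$ at each marked point $z_i$, and that by Theorem 1.5 (together with the jet evaluation transversality of section \ref{sec:stratawise}) this cut-out locus is a smooth manifold of the expected codimension $\sum_i 2n n_i$. Both statements (a) and (b) then amount to recognizing one such manifold as a submanifold of another and computing the codimension drop, which by the dimension formula is $2n n_i' - 2n n_i$ summed appropriately.

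For part (a): the profile $(k;\vec n + \vec e_\ell)$ differs from $(k;\vec n)$ only by raising $n_\ell$ to $n_\ell+1$ at the already-marked point $z_\ell$. Since the condition $\deg_{z_\ell}(u)\ge n_\ell+1$ is strictly stronger than $\deg_{z_\ell}(u)\ge n_\ell$, the corresponding moduli space is literally a subset of $\widetilde\CM_{g,k}(M,J;\beta,\vec n)$, and after passing to $\operatorname{Aut}(\Sigma)$-quotients the same holds for $\MM_{g,k}$. First I would invoke Theorem 1.5 for both profiles to see that each is smooth of the expected dimension; then I would use the higher jet evaluation transversality applied to the additional one-jet component (the holomorphic $(n_\ell+1)$-st jet relative to vanishing of the lower jets, which lives in an $n$-complex-dimensional fiber) to conclude that the inclusion $\MM_{g,k}(J;\beta,\vec n+\vec e_\ell)\hookrightarrow\MM_{g,k}(J;\beta,\vec n)$ is a smooth embedding, and the dimension formula \eqref{eq:index} minus $\sum 2n n_i$ gives a codimension drop of exactly $2n$.

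For part (b): the profile $(k+1,\vec n\cup\{1\})$ lives over a larger configuration space (one extra marked point), so there is no direct inclusion; instead one uses the forgetful map $\MM_{g,k+1}(J;\beta,\vec n+\vec e_{k+1})\to\MM_{g,k}(J;\beta,\vec n)$ that drops $z_{k+1}$. The image consists of those $(j,u,z_1,\dots,z_k)$ in $\MM_{g,k}(J;\beta,\vec n)$ for which there exists some additional ramification point; since $u$ is somewhere injective it has only finitely many critical points, so this extra point, if it exists, is uniquely determined, and the forgetful map is injective on this locus. I would show the differential of the forgetful map is injective there (the extra marked point being pinned down by the critical-point condition, which is itself transversal by the jet evaluation transversality applied to the $1$-jet at the free point), so the image is an embedded submanifold; its codimension is $\bigl(\dim\MM_{g,k}(J;\beta,\vec n)\bigr)-\bigl(\dim\MM_{g,k+1}(J;\beta,\vec n+\vec e_{k+1})\bigr)$, which from \eqref{eq:index} equals $(2n\cdot 1)-2k_{\mathrm{extra}}$, and since the extra marked point contributes $+2$ to the ambient dimension, the net codimension is $2n-2=2(n-1)$.

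The main obstacle I expect is part (b): one must argue carefully that the forgetful image is a genuine embedded submanifold rather than merely an immersed one, which requires knowing that the "free" ramification point varies smoothly and that no collision with the fixed marked points $z_1,\dots,z_k$ or with each other occurs on the relevant stratum — this is where the structure theorem for critical points from \cite{mcduff}, \cite{sikorav} (finitely many, each of finite order) and the strata-wise nature of the jet transversality are essential. A secondary subtlety is bookkeeping the $\operatorname{Aut}(\Sigma)$-action consistently when passing between $\widetilde\CM$ and $\CM$ so that the codimension counts are unaffected by the quotient; for $g\ge 1$ this is routine, and the $g=0$ case is handled as usual by fixing three points or working with the universal curve.
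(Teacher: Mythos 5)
Your part (a) is essentially the paper's own argument: the moduli space for $\vec n + \vec e_\ell$ is literally contained in the one for $\vec n$, Theorem 1.5 (i.e.\ the main transversality theorem) makes both smooth of their expected dimensions, and the difference $2n(n_\ell+1)-2nn_\ell = 2n$ gives the codimension. That part is fine and matches the paper, which for part (b) only defers to the $1$-jet transversality argument of \cite{oh-zhu} rather than writing out a proof.

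In part (b), however, there is a genuine gap: you claim the forgetful map is injective because ``$u$ has only finitely many critical points, so this extra point, if it exists, is uniquely determined.'' Finiteness does not give uniqueness. A somewhere injective curve $(j,u,z_1,\dots,z_k)$ in the image may well have two or more unmarked ramification points $w_1\neq w_2$, and then the distinct elements $(j,u,z_1,\dots,z_k,w_1)$ and $(j,u,z_1,\dots,z_k,w_2)$ of $\MM_{g,k+1}(J;\beta,\vec n+\vec e_{k+1})$ have the same image, so the forgetful map is not injective there and its image is at best immersed with self-intersections along that locus. The standard repair is stratawise: the curves with at least two extra ramification points are exactly the image of the deeper stratum $(k+2;\vec n\cup\{1,1\})$, which by the same transversality and dimension count has codimension $2\cdot 2(n-1) = 4(n-1)$, strictly larger than $2(n-1)$ for $n\geq 2$; so the forgetful map is injective (indeed a local diffeomorphism onto its image) on the complement of a subset of strictly smaller dimension, and the embedding statement is to be read on that open dense part of the stratum. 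Your dimension bookkeeping for (b) --- the extra marked point contributes $+2$ and the new $1$-jet condition $-2n$, hence net codimension $2(n-1)$ --- is correct, and your final paragraph correctly identifies the embedded-versus-immersed issue as the main obstacle; but the uniqueness claim you offer to resolve it is false as stated and must be replaced by the stratification argument above.
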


It has been a folklore  that ``for a generic choice of $J$, the
dimension of the moduli space of a given ramification profile goes
down when either the ramification order goes up or a new
ramification point is created''. However it has not been clear what
the precise statement of this folklore would really be. The above
theorem provides a precise form of this folklore. The main stumbling
block to make this folklore into a precise theorem has been what
kind of moduli spaces one should look at to obtain the kind of anticipated
dimension cutting-down statement hold. For example, it will become
clear in the course of our proof that the folklore cannot be
formulated in terms of moduli space of unmarked holomorphic maps. It
took the author some time to find out which moduli space is the
correct one with respect to which the necessary Fredholm framework
can be carried out. Only after the work \cite{oh-zhu} which concerns
the 1-jet evaluation transversality, the answer became clear to the
author. This has led the author to the Fredholm setting used in the
present paper.

Next we study the cardinality of ramification profiles of (unmarked) $J$-holomorphic curves
for a given genus $g$ and homology class $\beta \in H_2(M,\Z)$.
We have the obvious decomposition of the moduli space of unmarked $J$-holomorphic curves
\be\label{eq:union}
\widetilde \CM_g(M,J;\beta) = \bigcup_{k \in \N} \bigcup_{\vec n \in \N^k}
\widetilde \CM_g^{(k;\vec n)}(M,J;\beta)
\ee
where $\widetilde \CM_g^{(k;\vec n)}(M,J;\beta)$ is the subset of $\widetilde \CM_g(M,J;\beta)$ given by
$$
\widetilde \CM_g^{(k;\vec n)}(M,J;\beta) = \{ (j,u) \in \widetilde \CM_g(M,J;\beta) \mid
\ram(u) = (k;\vec n) \}.
$$
The relation between $\widetilde \CM_g^{(k;\vec n)}(M,J;\beta)$ and
$\widetilde \CM_{g,k}(M,J;\beta;\vec n)$ is the following :
Consider the forgetful map
\be\label{eq:forgetful}
\frak{forget}_{(k;\vec n)} :\widetilde \CM_{g,k}(M,J;\beta;\vec n)
\to \widetilde \CM_g(M,J;\beta).
\ee
Then we have
\beastar
\widetilde \CM_g^{(k;\vec n)}(M,J;\beta) & = &
\frak{forget}_{(k;\vec n)}
\left(\widetilde \CM_{g,k}(M,J;\beta;\vec n)\right) \\
&{}& \quad \Big \backslash \bigcup_{(k';\vec n') < (k;\vec n)}
\frak{forget}_{(k';\vec n')}\left(\widetilde
\CM_{g,k'}(M,J;\beta;\vec n')\right). \eeastar We note that \emph{an
element from $\widetilde \CM_g^{(k;\vec n)}(M,J;\beta)$ will have
their ramification points and ramification orders exactly the same
as prescribed.} A priori, the union \eqref{eq:union} could be an infinite union.

The following theorem says that
this will be a finite union for a generic choice of $J$.

\begin{thm}\label{thm:finite}
Let $\beta \in H_2(M,\Z)$ and $g$ be given.
Then for any $J \in \CJ_\omega^{ram}$,  the number of
types of ramification profiles of $\CM_g(M,J;\beta)$ is not bigger than
$$
P(c_1(\b)+(3-n)(g-1)),
$$
that is, the number of partitions of the integer
$c_1(\b)+(3-n)(g-1)$, when $c_1(\b)+(3-n)(g-1) \geq 0$.
\end{thm}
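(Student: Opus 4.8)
The plan is to use the dimension formula for the marked moduli spaces from the first main theorem (the one giving $\dim \widetilde\CM_{g,k}(M,J;\beta;\vec n) = \dim\widetilde\CM_{g,k}(M,J;\beta) - \sum_{i=1}^k 2n\,n_i$) together with the substitution of the index formula \eqref{eq:index}, and extract the constraint that forces the set of achievable ramification profiles to be finite. First I would observe that for $J\in\CJ_\omega^{ram}$ and a fixed genus $g$, a profile $(k;\vec n)$ can occur for some non-constant somewhere-injective $J$-holomorphic curve in class $\beta$ only if $\widetilde\CM_{g,k}(M,J;\beta;\vec n)$ is nonempty, which (since it is a smooth manifold) in particular forces
\[
\dim \widetilde\CM_{g,k}(M,J;\beta;\vec n) \geq 0.
\]
Plugging in \eqref{eq:index} for $g\geq 2$ (the cases $g=0,1$ being handled by the analogous branch of \eqref{eq:index}), the $+2k$ from the $k$ marked points cancels against part of the $-\sum 2n\,n_i$, and one is left with an inequality of the shape
\[
0 \;\leq\; 2\bigl(c_1(\beta) + (n-3)(1-g)\bigr) + 2k - \sum_{i=1}^k 2n\,n_i.
\]

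I would then simplify this. Writing $N := \sum_{i=1}^k n_i$ (so $N\geq k$ since each $n_i\geq 1$), the right-hand side is $2\bigl(c_1(\beta)+(n-3)(1-g)\bigr) + 2k - 2nN \leq 2\bigl(c_1(\beta)+(n-3)(1-g)\bigr) + 2k - 2nk = 2\bigl(c_1(\beta)+(n-3)(1-g)\bigr) - 2(n-1)k$; but a cleaner route, matching the stated bound, is to keep the marked-point contribution paired with the $n=1$ part of the ramification penalty. Since each marked ramification point contributes $-2n\,n_i$ but also $+2$, the net contribution of that point is $+2 - 2n\,n_i = -2\bigl((n-1)n_i + (n_i-1)\bigr) \leq -2(n_i - 1) \cdot 1 - 0$ in the worst relevant case $n=1$; more precisely, I want to show that nonemptiness forces
\[
\sum_{i=1}^{k} n_i \;\leq\; c_1(\beta) + (3-n)(g-1),
\]
i.e. the total ramification is bounded by the integer $c_1(\beta)+(3-n)(g-1)$. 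This is exactly the content of the Riemann–Hurwitz-type bound: the virtual dimension of the space of maps with total ramification $N$ is $\dim\widetilde\CM_g(M,J;\beta) - (\text{something})\cdot N$, and a negative virtual dimension rules the stratum out. I would verify the arithmetic carefully for each of the three genus regimes of \eqref{eq:index}, confirming that in all cases the obstruction reduces to $N \le c_1(\beta) + (3-n)(g-1)$ when that quantity is nonnegative (and that no nonconstant profile occurs when it is negative, consistent with the hypothesis of the theorem).

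Finally, a ramification profile $(k;\vec n)$ of an \emph{unmarked} curve in $\CM_g(M,J;\beta)$ is, by definition, an unordered collection $\{n_1,\dots,n_k\}$ of positive integers — that is, a partition of the integer $N = \sum n_i$. Since $N$ is bounded above by $L := c_1(\beta)+(3-n)(g-1)$, the number of such partitions is at most $\sum_{N=0}^{L} P(N)$; but because $P$ is monotone and one may pad any partition of $N \le L$ to relate it to partitions of $L$ itself, the total is bounded by $P(L)$ — here it is cleanest to note that the profiles of fixed total $N$ are counted by $p(N)$ and $\sum_{N\le L} p(N) = p_{\le}(L)$, which the author evidently intends to bound by (or identify with) $P(L)$ via the standard generating-function inequality; I would state this combinatorial step precisely and cite the elementary fact $\sum_{N=0}^{L} p(N) \le p(L+1) \le \dots$, or simply use that the number of partitions into parts of size $\le$ anything with total $\le L$ is $P(L)$ after adjoining a dummy part. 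I expect the main (though still routine) obstacle to be pinning down exactly which inequality chain the author means by ``$P(c_1(\beta)+(3-n)(g-1))$'' — i.e. reconciling ``$\sum_{N\le L} p(N)$'' with ``$p(L)$'' — and checking that the marked-point index contribution $+2k$ does not spoil the bound; the Fredholm and transversality content is entirely imported from the earlier theorems, so no new analysis is needed.
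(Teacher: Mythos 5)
Your proposal follows essentially the same route as the paper: the paper likewise computes $\dim \CM_{g,k}(M,J;\beta;\vec n) = 2\bigl(c_1(\beta)+(3-n)(g-1) - \sum_{\ell}(n n_\ell - 1)\bigr)$ from the index formula and the codimension $\sum_i 2n\,n_i$, reads off that nonemptiness forces $\sum_{\ell}(n n_\ell -1)\le c_1(\beta)+(3-n)(g-1)$ (your weaker form $\sum_\ell n_\ell \le L$ follows since $n n_\ell - 1\ge n_\ell$ for $n\ge 2$, the only case the paper treats), and then counts profiles by partitions. The combinatorial step you flag at the end --- reconciling the count of partitions of all integers $\le L$ with $P(L)$ itself --- is left equally implicit in the paper, so your concern is well placed but does not indicate a divergence of method.
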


We note that if $c_1(\b)+(3-n)(g-1) - n < 0$, then the corresponding moduli
space will be empty. And we also emphasize that each stratum
of the union \eqref{eq:union}
could have singularities as a subset of $\widetilde \CM(M,J;\beta)$.

The statements in the above theorems are somewhat
reminiscent of the N\"otherian
property of holomorphic maps in projective algebraic varieties.
We find it curious that this kind of finite statements hold
in two opposite ends of the category and wonder if there is any universal
phenomenon in that direction.

The Fredholm framework and the scheme of the relevant
evaluation transversality proof that we employ in the present paper
are the higher jet analogs to the ones used for the 1 jet evaluation transversality
studied in \cite{oh-zhu}. A similar higher jet analysis is also
carried out in \cite{oh:seidel} in relation to the compactification of the moduli
space of smooth holomorphic sections of the (singular)
Lefschetz Hamiltonian fibrations.

We thank Zhu for having many enlightening discussions during the collaboration of the
work \cite{oh-zhu} and other projects.

\section{Jet evaluation map and holomorphic jets}
\label{sec:jet}

In this section, we study a smooth map $u: \Sigma \to M$ whose
first $k$ derivatives vanish $z \in \Sigma$, i.e.,
$$
j^ku(z) = 0
$$
where $j^ku(z)$ is the $k$-jet of the map $u$. For such a map, we
would like to say that the $(k+1)$-th derivative of $(j,J)$-holomorphic map
$u$ at $z$
induces a canonical linear map from $T_z \Sigma \to H^{(k+1,0)}_{z,u(z)} \cong
T_{u(z)}M$ where $H^{(k+1,0)}_{z,u(z)}$ is the set of `holomorphic
part' of the $(k+1)$-jet space.
We will make this statement precise in the rest of the section.

For this purpose, we recall the definition of $k$-jet bundle and the
$k$-jet $j^ku(z)$ at $z \in \Sigma$. (See \cite{hirsch} for a nice
exposition on the jet bundle.) The $k$-jet bundle $J^k(\Sigma,M) \to
\Sigma \times M$ is the vector bundle whose fiber at $(z,x)$ is
given by
$$
J^k_{(z,x)}(\Sigma,M) = P^k(T_z\Sigma, T_xM)
$$
where $P^k(T_z\Sigma, T_xM)$ is the set of polynomial maps from $T_z\Sigma$
to $T_xM$ of degree $\leq k$, or
$$
P^k(T_z\Sigma, T_xM) =\prod_{\ell = 0}^k Sym^\ell(T_z\Sigma, T_xM)
$$
where $Sym^\ell(T_z\Sigma, T_xM)$ is the vector space of symmetric
$\ell$-linear maps from $T_z\Sigma \to T_xM$.

We have a natural sequence of bundles over $\Sigma \times M$
$$
J^0(\Sigma,M) \leftarrow J^1(\Sigma,M) \leftarrow \cdots\leftarrow  J^k(\Sigma,M) \leftarrow
$$
and
$$
J^0_x(\Sigma,M) \leftarrow J^1_x(\Sigma,M) \leftarrow \cdots \leftarrow J^k_x(\Sigma,M) \leftarrow
$$
Here the map $\pi:J^{k+1}(\Sigma,M) \to J^k(\Sigma,M)$ is defined by the `truncation' of
polynomial map $P$ to the terms of order $\leq k$. We denote by $P^{\leq k}$
the truncation of $P$ thereto. Then we have
$$
\pi(z,x;P) = (z,x;P^{\leq k})
$$

Now we consider the space
$$
\CF_1(\Sigma,M) = \{ (u,z) \mid u: \Sigma \to M, \, z \in \Sigma\}.
$$
We have the natural $k$-jet evaluation map
$$
j^k : \CF_1(\Sigma,M) \to J^k(\Sigma,M) \,;\, j^k(u,z) = j^k_zu
$$
and for a fixed point $z \in \Sigma$
$$
j^k_z : \CF(\Sigma,M) \to J^k_{z,(\cdot)(z)}(\Sigma, M)\, ;\, j^k_z(u) = j^k_zu.
$$

We have a short exact sequence
$$
0 \to \ker \pi \to J^{k+1}(\Sigma,M) \stackrel{\pi}{\to}
J^k(\Sigma,M) \to 0
$$
of vector bundles over $\Sigma \times M$. By construction, we have
\be\label{pijk+1=jk}
\pi \circ j^{k+1} = j^k.
\ee

Now we equip $\Sigma$ and $M$ with almost complex structures $j$ and
$J$ respectively. The almost complex structures $j$ on $\Sigma$ and
$J$ on $M$ naturally split off the direct summands of $Sym^k(\Sigma,M)$
such as
$$
Sym^{k}(\Sigma, M) = Sym^{(k,0)}_{(j_z,J_x)} (\Sigma,M) \oplus
Sym^{(0,k)}_{(j_z,J_x)}(\Sigma,M) \oplus ``\mbox{mixed parts}".
$$
\begin{defn} We call an element $L \in Sym^k(T_z\Sigma, T_xM)$
\emph{holomorphic} at $(z,x)$ (relative to $(j,J)$) if
$L$ lies in $H^{(k,0)}_{z,x}(\Sigma,M) := Sym^{(k,0)}_{(j_z,J_x)}(\Sigma,M)$.
\end{defn}
We note that the vector space $Sym^{(k;0)}_{(j_z,J_x)}(\Sigma,M)$ has the same dimension
as $T_xM$  for all $k$, which is nothing but $2n$.

We denote
$$
J^k_{hol,(z,x)}(\Sigma,M) = \bigoplus_{\ell = 0}^k
H^{(\ell,0)}_{(z,x)}(\Sigma,M)
$$
and form the union
$$
J^k_{hol}(\Sigma,M) = \bigcup_{(z,x) \in \Sigma \times M} J^k_{hol,(z,x)}(\Sigma,M)
$$
which we call the \emph{holomorphic jet bundle} of $\Sigma \times M$ relative to
$(j,J)$.

Then we have the natural projection
$$
\pi_{(j,J)}^{hol}: J^k(\Sigma,M) \to J^k_{hol}(\Sigma,M)
$$
and the inclusion
$$
i_{(j,J)}^{hol}: J^k_{hol} (\Sigma,M) \to J^{k}(\Sigma,M).
$$

Now consider a $C^{k+1}$-map $u: \Sigma \to M$. We define
the \emph{$k$-th holomorphic jet} of $u:\Sigma \to M$ by
$$
j^k_{hol} u(z) : =  \pi_{(j,J)}^{hol}(j^k u(z)) =
\bigoplus_{\ell=0}^k \pi_{j,J}^{hol}(d^\ell u(z))
$$
and
$$
\sigma^k(J,(j,u),z) : = \pi_{(j,J)}^{hol}(d^{k}u(z)).
$$

\begin{defn} Let $u: \Sigma \to M$ be a smooth map satisfying
$$
j^k u(z) = 0 \quad \mbox{ but } j^{k+1}u(z) \neq 0.
$$
We say that $u$ has \emph{ramification degree} $k$ and $j^{k+1}u(z)$
the \emph{principal jet} of $u$. The \emph{holomorphic
ramification degree} is defined similarly by using holomorphic jets
$j^k_{hol}u$ instead of $j^ku$.

For a map $u$ with ramification degree $k$, we call
$\sigma^{k+1}(J,(j,u),z)$ the \emph{principal holomorphic jet}
of the map $u$ at $z$.
\end{defn}

We next describe the holomorphic principal jet of a smooth map
$u:\Sigma \to M$ relative to $(j,J)$ in complex coordinates.
We refer to \cite{mcduff-92} for further details of some relevant exposition.

Let $z = s+ it$ be a complex coordinate of $(\Sigma,j)$ centered at
$z_0$, and choose real coordinates $(x_1,\cdots, x_n, y_1, \cdots, y_n)$ of $M$
centered at $p_0=u(z_0)$ for $j = 1, \cdots, n$ such that
\be\label{eq:complexatp0}
J\frac{\del}{\del x_j}\Big|_{p_0} = \frac{\del}{\del y_j}\Big|_{p_0}, \quad
J\frac{\del}{\del y_j}\Big|_{p_0} = - \frac{\del}{\del x_j}\Big|_{p_0}
\ee
We denote the associated complex coordinates by
$(w_1,\cdots, w_n)$ with $w_j = x_j + \sqrt{-1}y_j$. If $j^ku(z_0) = 0$, then we can
expand the map $u$ into the Taylor polynomial
$$
u(z) = \sum_{\ell = 0}^{k+1} \vec a_\ell z^\ell \overline z^{k-\ell} +
o(|z|^{k+1}).
$$
Combining \eqref{eq:complexatp0} and $j^ku(z) = 0$, we derive

\begin{lem}
Suppose $u$ is $(j,J)$-holomorphic, i.e., $\delbar_{(j,J)}u = 0$ and $j^ku(z) = 0$. Then
we have
$$
u(z) = \vec a_{k+1} z^{k+1} + o(|z|^{k+1})
$$
with $a_{k+1} \in T_{p_0}M \cong H_{z_0}^{(k;0)}$.
In particular, the principal jet of $(j,J)$-holomorphic map $u$ is holomorphic
at any point $z$, and the ramification degree of $u$ is the same as
the holomorphic ramification degree at any given point $z \in \Sigma$.
\end{lem}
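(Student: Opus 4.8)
The plan is to substitute the Taylor expansion of $u$ at $z_0$ into the nonlinear Cauchy--Riemann equation and then to use the normalization $J(p_0)=J_0$ together with $j^ku(z_0)=0$ to conclude that the principal homogeneous Taylor part is $\delbar$-closed, hence holomorphic. I work in the coordinates already fixed: a complex coordinate $z=s+it$ on $(\Sigma,j)$ centered at $z_0$, and coordinates $(w_1,\dots,w_n)$ on $M$ centered at $p_0=u(z_0)$ satisfying \eqref{eq:complexatp0}; I identify $T_{p_0}M$ with $\C^n$ so that $J_0:=J(p_0)$ is multiplication by $\sqrt{-1}$. In these coordinates $\delbar_{(j,J)}u=0$ reads $\del_su+J(u)\del_tu=0$, which I rewrite as
$$
\del_su+J_0\del_tu=-\bigl(J(u)-J_0\bigr)\del_tu .
$$
Because $j^ku(z_0)=0$, every partial derivative of $u$ of order $\le k$ vanishes at $z_0$, so $u(z)-p_0=O(|z|^{k+1})$ and $\del_tu(z)=O(|z|^k)$; since $J$ is a smooth tensor with $J(p_0)=J_0$ we also get $|J(u(z))-J_0|\le C\,|u(z)-p_0|=O(|z|^{k+1})$. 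Feeding these bounds into the displayed identity gives $\del_su+J_0\del_tu=O(|z|^{2k+1})$, i.e.\ in complex notation $\del_{\overline z}u(z)=O(|z|^{2k+1})$.

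Next I would compare this with the Taylor expansion of $u$. By interior elliptic regularity for $J$-holomorphic maps with smooth $J$ (see also \cite{mcduff}, \cite{sikorav}, \cite{mcduff-92}) one may take $u$ to be $C^{k+1}$, and since its $k$-jet vanishes at $z_0$ its degree-$(k+1)$ Taylor polynomial is a single homogeneous piece $P(z)=\sum_{\ell=0}^{k+1}\vec a_\ell\,z^\ell\overline z^{\,k+1-\ell}$, so $u(z)=P(z)+o(|z|^{k+1})$. Since $\del_{\overline z}u\in C^k$ and the degree-$k$ Taylor polynomial of $\del_{\overline z}u$ at $z_0$ equals $\del_{\overline z}P(z)=\sum_{\ell=0}^{k}(k+1-\ell)\,\vec a_\ell\,z^\ell\overline z^{\,k-\ell}$, which is homogeneous of degree exactly $k$, Taylor's theorem gives $\del_{\overline z}u(z)=\del_{\overline z}P(z)+o(|z|^k)$. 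Comparing with the estimate $\del_{\overline z}u=O(|z|^{2k+1})=o(|z|^k)$ and using that a nonzero homogeneous polynomial of degree $k$ cannot be $o(|z|^k)$ forces $\del_{\overline z}P\equiv0$; as the monomials $z^\ell\overline z^{\,k-\ell}$, $0\le\ell\le k$, are linearly independent this means $\vec a_\ell=0$ for all $\ell<k+1$, hence $u(z)=\vec a_{k+1}z^{k+1}+o(|z|^{k+1})$ with $\vec a_{k+1}$ holomorphic, i.e.\ $\vec a_{k+1}\in H^{(k;0)}_{z_0}\cong T_{p_0}M$. The remaining assertions follow at once: the principal jet $j^{k+1}u(z_0)$ is represented by the holomorphic monomial $z\mapsto\vec a_{k+1}z^{k+1}$, so $\pi_{(j,J)}^{hol}$ restricts to the identity on it and $\sigma^{k+1}(J,(j,u),z_0)=d^{k+1}u(z_0)$; repeating the argument at an arbitrary point $z\in\Sigma$ (with $k$ now the ramification degree there), the lower jets vanish and the principal jet is holomorphic, so $\pi_{(j,J)}^{hol}$ does not annihilate it, and therefore the holomorphic ramification degree of $u$ at $z$ equals its ramification degree.

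The order bookkeeping in the first step is routine, and I do not expect a serious obstacle; the only point that deserves care — the crux, such as it is — is the passage from the pointwise decay $\del_{\overline z}u=O(|z|^{2k+1})$ to the algebraic statement that the principal homogeneous Taylor coefficient is holomorphic. This rests on the uniqueness of Taylor expansions applied to the $C^k$ map $\del_{\overline z}u$ together with the elementary observation that a nonzero degree-$k$ homogeneous polynomial is not $o(|z|^k)$ as $z\to0$, and it uses $u\in C^{k+1}$, which is why I invoke interior elliptic regularity (equivalently the local structure results of \cite{mcduff}, \cite{mcduff-92}) at the outset. Apart from these standard facts no further analysis is needed.
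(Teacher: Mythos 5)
Your argument is correct, and it is precisely the standard computation the paper is implicitly invoking when it says the lemma follows by ``combining \eqref{eq:complexatp0} and $j^ku(z)=0$'': writing the equation as $\del_s u+J_0\del_t u=-(J(u)-J_0)\del_t u$, using the jet vanishing to get $\del_{\overline z}u=o(|z|^k)$, and comparing with the homogeneous degree-$(k+1)$ Taylor term to kill the coefficients $\vec a_\ell$, $\ell\le k$. The order bookkeeping and the deduction of the ``in particular'' clauses are all sound, so the proposal matches the paper's (essentially omitted) proof.
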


It is easy to check that the principal term $\vec a_{k+1} z^{k+1}$,
regarded as an element in
$Sym^{k+1}_{(j_{z_0},J_{u(z_0)})}(T_{z_0}\Sigma,T_{u(z_0)}M)$, has
the from
$$
j^{k+1}u(z_0) = d^{k+1}u(z_0) = \vec a \cdot dz^{\otimes (k+1)}, \quad \vec a \in T_{u(z_0)}M
$$
for any map $(j,J)$-holomorphic map $u$ with $j^ku(z_0) = 0$.

Now, we immediately obtain the following characterization for a
$(j,J)$-holomorphic map $u$ with $j^ku (z_0) = 0$ to satisfy
$j^{k+1} u(z_0) = 0$.

\begin{lem}\label{lem:dk+1u=0} Let $u$ be a $(j,J)$-holomorphic map with
$j^ku(z_0) = 0$. Then $j^{k+1}u(z_0) = 0$ if and only if
$
\sigma^{k+1}(J,(j,u), z_0) = 0.
$
\end{lem}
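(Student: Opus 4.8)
The plan is to deduce both implications directly from the preceding lemma, which already pins down the exact shape of the $(k+1)$-jet of a $(j,J)$-holomorphic map $u$ with $j^ku(z_0)=0$; essentially no new analysis is needed beyond the Taylor expansion established there.

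First I would record that, because $j^ku(z_0)=0$, the jet $j^{k+1}u(z_0)$ collapses onto its single top-degree component $d^{k+1}u(z_0) \in Sym^{k+1}(T_{z_0}\Sigma,T_{u(z_0)}M)$, all lower-order symmetric tensors $d^\ell u(z_0)$ with $0\le \ell \le k$ being zero. By definition $\sigma^{k+1}(J,(j,u),z_0)=\pi_{(j,J)}^{hol}(d^{k+1}u(z_0))$, so the forward implication is immediate: if $j^{k+1}u(z_0)=0$ then $d^{k+1}u(z_0)=0$, and applying the linear map $\pi_{(j,J)}^{hol}$ gives $\sigma^{k+1}(J,(j,u),z_0)=0$.

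For the converse I would invoke the preceding lemma, which asserts that for a $(j,J)$-holomorphic $u$ with $j^ku(z_0)=0$ the principal jet $d^{k+1}u(z_0)$ is holomorphic, i.e., it lies in the summand $H^{(k+1,0)}_{z_0,u(z_0)}=Sym^{(k+1,0)}_{(j_{z_0},J_{u(z_0)})}(T_{z_0}\Sigma,T_{u(z_0)}M)$ of the $(j,J)$-induced splitting of $Sym^{k+1}$ into holomorphic, anti-holomorphic and mixed parts. Since $\pi_{(j,J)}^{hol}$ is, by construction, the projection of $J^{k+1}(\Sigma,M)$ onto $J^{k+1}_{hol}(\Sigma,M)=\bigoplus_{\ell=0}^{k+1}H^{(\ell,0)}$ along the complementary summands, it restricts to the identity on $H^{(k+1,0)}_{z_0,u(z_0)}$; hence $\sigma^{k+1}(J,(j,u),z_0)=\pi_{(j,J)}^{hol}(d^{k+1}u(z_0))=d^{k+1}u(z_0)$. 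Therefore $\sigma^{k+1}(J,(j,u),z_0)=0$ forces $d^{k+1}u(z_0)=0$, and together with $j^ku(z_0)=0$ this yields $j^{k+1}u(z_0)=0$.

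The one point that calls for a little care — and it is bookkeeping rather than a genuine obstacle — is the compatibility of the various identifications: that the top-degree term of the $(k+1)$-jet of a map annihilating its $k$-jet really is the whole $(k+1)$-jet, and that $\pi_{(j,J)}^{hol}$ is idempotent with image exactly $J^{k+1}_{hol}(\Sigma,M)$, so that it acts as the identity on the holomorphic summand into which the preceding lemma places $d^{k+1}u(z_0)$. Once these are in place, the statement is a formal consequence of the Taylor expansion $u(z)=\vec a_{k+1}z^{k+1}+o(|z|^{k+1})$ established above, with $\vec a_{k+1}$ identified with an element of $T_{u(z_0)}M\cong H^{(k+1,0)}_{z_0,u(z_0)}$; no transversality or estimate is involved.
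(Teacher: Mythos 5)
Your proof is correct and follows essentially the same route the paper intends: the lemma is stated there as an immediate consequence of the preceding Taylor-expansion lemma, which shows $d^{k+1}u(z_0)=\vec a\cdot dz^{\otimes(k+1)}$ already lies in the holomorphic summand, so $\sigma^{k+1}(J,(j,u),z_0)=\pi^{hol}_{(j,J)}(d^{k+1}u(z_0))=d^{k+1}u(z_0)$ and the two vanishing conditions coincide. Your bookkeeping about the jet collapsing to its top-degree component and $\pi^{hol}_{(j,J)}$ acting as the identity on $H^{(k+1,0)}_{z_0,u(z_0)}$ is exactly the content the paper leaves implicit.
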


We note that when $k =1$, $\sigma(J,(j,u),z) = \del_{(j,J)}u(z)$,
the holomorphic part of the derivative $du(z_0)$. This lemma is the
higher jet analog to Lemma 2.2 \cite{oh-zhu} which will be the basis
of our Fredholm setting for the evaluation transversality in higher
jets.

\section{Fredholm framework}
\label{sec:Fredholm}

Let $\Sigma$ be a compact orientable surface without boundary. We
consider a triple $(J,(j,u),z)$ of compatible $J$ on $M$, $j$
complex structure on $\Sigma$, $u : \Sigma \to M$ a smooth map and a
point $z \in \Sigma$.

Denote \beastar \CF(\Sigma,M;\beta) & = & \{(j,u) \mid j \in
\CM(\Sigma),\, u: \Sigma \to M, [u]=\beta\} \\
\CF_1(\Sigma,M;\beta) & = & \{ ((j,u),z) \mid (j,u) \in
\CF(\Sigma,M;\beta), \, z \in \Sigma \} \eeastar
and consider the
evaluation map
$$
\sigma^k: \CF_1(\Sigma,M;\beta) \to H^{(k,0)}\, ; \, (J,(j,u),z) \mapsto \sigma^k(J,(j,u),z).
$$
We will interpret this map as a section of some vector bundle over $\CJ_\omega \times \CF_1(\Sigma,M;\beta)$.

For any given $(j,J)$ and $(u,z) \in \CF_1(M;\beta;k) $, consider the
vector space
$$
H^{(k,0)}_{(J,(j,u),z)}: =
Sym^{(k,0)}_{j_z,J_{u(z)}}(T_z\Sigma,T_xM)
$$
of dimension $2n = \dim M$ and the vector bundle
$$
H^{(k,0)}_1: = \bigcup_{(J,(j,u),z)}
H^{(k,0)}_{(J,(j,u)z)}
$$
over $\CJ_\omega \times \CF_1(M;\beta;k) $ with
$H^{(k;0)}_{(J,(j,u),z)}$ as its fiber
where the union is taken for all $(J,(j,u,z)$.

Then the following lemma is immediate by definition.

\begin{lem} \label{lem:sigma} The map
$$
\sigma^k: \CJ_\omega \times \CF_1(M;\beta;k) \to H^{(k,0)}_1(\Sigma \times M)
$$
defined by $\sigma(J,(j,u),z) = \pi_{(j,J)}^{hol}(d^{k}u(z))$
is a smooth section of the vector bundle
$$
H^{(k,0)}_1 = H^{(k,0)}_1(\Sigma \times M) \to \CJ_\omega \times \CF_1(M;\beta).
$$
\end{lem}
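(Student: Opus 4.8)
The plan is to unwind the definitions and check the two things that "smooth section of a vector bundle" requires: first, that $\sigma^k$ lands in the claimed fiber over each point, and second, that the assignment is smooth in all variables jointly. The first point is essentially a tautology given the construction in Section \ref{sec:jet}: for $(J,(j,u),z)$ the element $d^ku(z)$ lies in $Sym^k(T_z\Sigma,T_{u(z)}M)$, and applying the projection $\pi_{(j,J)}^{hol}$ (which is defined precisely by the splitting of $Sym^k$ induced by $j_z$ and $J_{u(z)}$) produces an element of $H^{(k,0)}_{(J,(j,u),z)} = Sym^{(k,0)}_{j_z,J_{u(z)}}(T_z\Sigma,T_xM)$. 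So $\sigma^k(J,(j,u),z)$ sits in the fiber of $H^{(k,0)}_1$ over $(J,(j,u),z)$ by construction, and $\sigma^k$ is a set-theoretic section. It remains only to verify smoothness.

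For smoothness, first I would recall the manifold (Banach/Fréchet, or Banach after choosing the appropriate $W^{\ell,p}$ or $C^\infty$ completion — whichever convention the rest of the paper adopts) structures on $\CJ_\omega$, on $\CM(\Sigma)$, on the mapping space, and on $\CF_1(M;\beta;k)$, and recall that $H^{(k,0)}_1$ is a genuine smooth vector bundle over $\CJ_\omega \times \CF_1(M;\beta;k)$ — this is exactly the content of the construction preceding the lemma, where the fibers are assembled into a bundle by pulling back $J^k(\Sigma,M)$ along the evaluation and then applying the fiberwise holomorphic projection, which varies smoothly because both $j\mapsto j_z$ and $(J,u,z)\mapsto J_{u(z)}$ are smooth. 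Then I would factor $\sigma^k$ as the composition of three manifestly smooth maps: (i) the total $k$-jet evaluation $j^k\colon \CF_1(M;\beta;k)\to J^k(\Sigma,M)$, $((j,u),z)\mapsto j^k_zu$, which is smooth because differentiation up to a fixed finite order $k$ is a bounded linear (hence smooth) operation on the relevant completions and evaluation at $z$ together with letting $z$ vary is smooth; (ii) the projection onto the top symmetric component, $d^ku(z) = (j^k u(z))^{\mathrm{top}}$, again a smooth (linear on fibers) bundle map; and (iii) the fiberwise holomorphic projection $\pi_{(j,J)}^{hol}$, which depends smoothly on $(j,J)$ and on the basepoints through the linear-algebra formula for the $(k,0)$-part of a symmetric $k$-tensor in terms of $j_z$ and $J_{u(z)}$.

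The one genuinely substantive point — and the step I expect to be the main obstacle, or at least the one requiring care — is smoothness of the evaluation-and-differentiation map $j^k$ as a map of infinite-dimensional manifolds, because it combines three sources of nonlinearity: dependence on the varying complex structure $j$ (through which the splitting and even the notion of "$z$-coordinate" enters), dependence on the map $u$ through composition $J_{u(\cdot)}$, and dependence on the moving point $z$. The standard way around this is the usual local-triviality argument: fix $(j_0,u_0,z_0)$, choose a $j_0$-holomorphic coordinate near $z_0$ and normal coordinates on $M$ near $u_0(z_0)$ as in \eqref{eq:complexatp0}, express everything in these local trivializations, and observe that $d^ku(z)$ becomes a polynomial expression in the first $k$ derivatives of the coordinate representation of $u$ with coefficients that are smooth functions of $j$ (via the coordinate change from $j_0$-coordinates to $j$-coordinates), of $z$, and of the $1$-jet of $u$; composition with the smooth bundle data $\pi^{hol}_{(j,J)}$ then preserves smoothness by the $\omega$-lemma (smoothness of superposition/composition operators on the chosen completions). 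Assembling these local statements and invoking that $H^{(k,0)}_1$ was constructed as a smooth bundle gives that $\sigma^k$ is a smooth section, completing the proof. I would keep this verification brief, since it is entirely parallel to the $k=1$ case treated in \cite{oh-zhu} and the analogous higher-jet bookkeeping in \cite{oh:seidel}.
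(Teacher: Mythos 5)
The paper offers no proof of this lemma at all---it is declared ``immediate by definition''---so there is nothing of substance to compare against. Your verification is correct and simply makes explicit the standard points the paper leaves implicit (fiberwise membership is tautological from the construction of $H^{(k,0)}_1$; smoothness of the $k$-jet evaluation requires at least a $W^{k+1,p}$-completion, $p>2$; the projection $\pi_{(j,J)}^{hol}$ depends smoothly on $(j,J)$ and the basepoint), and the regularity caveat you flag is exactly the one the paper itself only addresses later, in the Banach-manifold remarks inside the proof of Theorem \ref{thm:main}.
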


\begin{rem} It is crucial in the Fredholm analysis that the section $\sigma^k$ can
be defined on the space of smooth maps,
not just on the moduli space of $J$-holomorphic maps.
\end{rem}

We introduce the standard bundle \be \CH'' = \bigcup_{((j,u),J)}
\CH''_{((j,u),J)}, \quad \CH''_{((j,u),J)} =
\Omega_{j,J}^{(0,1)}(u^*TM) \ee
and define a map $\Upsilon_k$ by
\be
\Upsilon_k (J,(j,u),z) = (\delbar(J,(j,u));\sigma^k(J,(j,u),z))
\ee
where we denote
$$
\delbar(J,(j,u)) : =  \delbar_{j,J}(u) = (du)^{(0,1)}_{j,J} = \frac{du + J du j}{2}
$$
We denote by
$$
\pi_1: \CJ_\omega \times \CF_1(M;\beta) \to \CJ_\omega \times \CF(M;\beta)
$$
the forgetful map of the marked point and consider the fiber product
$$
\CH''\times_{\pi_1} H_1^{(k,0)}
$$
of the two bundles,
$$
\CH'' \to \CJ_\omega \times \CF(M;\beta)
$$
and
$$
H^{(k,0)}_1\to \CJ_\omega  \times \CF_1(M;\beta) \to \CJ_\omega \times
\CF(\Sigma,M;\beta).
$$
More explicitly, we have
\beastar
&{}& \CH''\times_{\pi_1} H^{(k,0)}_1\\
& : = & \Big \{ (\eta,\zeta_0;J,(j,u),z) \,
\Big| \, \eta \in \CH''_{(J,(j,u))}, \, \zeta_0 \in
H^{(k,0)}_{(J,(j,u),z)}, \, (J,(j,u),z) \in \CF_1(M;\beta) \Big \}.
\eeastar
We regard the fiber product as a bundle over $\CJ_\omega \times
\CF_1(M;\beta) $ whose fiber at $(J,(j,u),z)$ is given by
$$
\CH''_{(J,(j,u))} \oplus H^{(k,0)}_{(J,(j,u),z)}.
$$
Then $\Upsilon_k$ defines a smooth map
$$
\Upsilon_k : \CJ_\omega \times \CF_1(M;\beta) \to \CH''\times
H_1^{(k,0)}
$$
which becomes a smooth section of this vector bundle.

One can generalize the above discussion by considering arbitrary finite
number of marked points and holomorphic $n$-jets, not just $\sigma^n$.

Let $\Sigma$ be a closed Riemann surface. We denote by $\widetilde{\Conf}_k(\Sigma)
\subset \Sigma^k$ the set of $k$ ordered distinct points on $\Sigma$, and
$$
\widetilde{\Conf}(\Sigma) = \bigcup_{k=0}^\infty \widetilde{\Conf}_k(\Sigma).
$$
For each given $k$ distinct points $\vec z = \{z_1, \cdots, z_k\}$,
we consider the decoration of integers $n_i$ assigned at $z_i$'s. We
call $k$ the \emph{length} of the configuration $\vec z \in
\widetilde{\Conf}(\Sigma)$. We denote $\vec n = \{n_1, \cdots,
n_k\}$ and $K = \{1, \cdots, k\}$. For given $k \leq k' $ with $K' =
\{1, \cdots, k'\}$, we decompose
$$
K' = K \cup (K' \setminus K).
$$
\begin{defn}\label{def:order} Consider the set of pairs $(k; \vec n)$ with
$\vec n \in \Z^k$
where $k = \operatorname{leng}(\vec n)$.
We say $(k';\vec n') < (k; \vec n)$ if
$$
k \leq k' , \mbox{ and } \quad n_i \leq n_i' \, \mbox{ for all } \,
i \in K \subset K'
$$
\end{defn}

\begin{rem} This definition of partial order is consistent with the
lower semi-continuity of the ramification degree under the limit of
a sequence of $J$-holomorphic maps in $C^\infty$-topology.
\end{rem}

We denote
$$
\CF_k(M;\beta) = \{((j,u), \vec z) \mid [u] = \beta, \, \vec z = (z_1, \cdots, z_k) \}.
$$
and define
$$
\widetilde \CM_{g,k}(M;\beta;\vec n)  = \{((j,u),z) \in \CF_k(M;\beta)  \mid \delbar_{(j,J)}u = 0, \,
\deg_{z_i} = n_i, \, \Crit(u) \supset \vec z \}.
$$
Then since the cardinality of and the degrees of ramification points of
a pseudo-holomorphic map are finite (see \cite{mcduff}, \cite{sikorav} for the proof),
we immediately have

\begin{lem} Let $J$ be any almost complex structure.
Denote by $\widetilde \CM^{(k;\vec n)}_g(M,J;\beta)$
the image of the forgetful map
$$
\frak{forget}_{(k;\vec n)} : \widetilde \CM_{g,k}(M,J;\beta;\vec n) \to
\widetilde \CM_g(M,J;\beta).
$$
Then we have the decomposition
\be\label{eq:decompose}
\widetilde \CM_g(M,J;\beta) = \bigcup_{(k;\vec n)} \widetilde \CM^{(k;\vec n)}_g(M,J;\beta).
\ee
\end{lem}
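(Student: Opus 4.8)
The plan is to split the claimed identity \eqref{eq:decompose} into its two inclusions and to note that only $\subseteq$ carries content, the inclusion $\supseteq$ being immediate from the definitions. For $\supseteq$ I would simply observe that $\frak{forget}_{(k;\vec n)}$ forgets the marked points while leaving $(j,u)$ untouched, and that every $((j,u),\vec z) \in \widetilde \CM_{g,k}(M,J;\beta;\vec n)$ is by definition $J$-holomorphic with $[u]=\beta$; hence $\widetilde \CM^{(k;\vec n)}_g(M,J;\beta) \subset \widetilde \CM_g(M,J;\beta)$ for every $(k;\vec n)$, and so is their union.

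For the inclusion $\subseteq$ I would fix $(j,u) \in \widetilde \CM_g(M,J;\beta)$ and appeal to the structure theorems on singularities of $J$-holomorphic curves from \cite{mcduff}, \cite{sikorav}. By our standing convention, $(j,u)$ is somewhere injective, hence non-constant, so those theorems apply: $\Crit(u)$ is a finite set $\{z_1,\dots,z_k\} \subset \Sigma$ (fix any ordering) and each $z_i$ has a finite ramification degree $n_i := \deg_{z_i}(u) \in \N$. Setting $\vec z = (z_1,\dots,z_k)$ and $\vec n = (n_1,\dots,n_k)$, so that $\ram(u) = (k;\vec n)$, the pair $((j,u),\vec z)$ satisfies $\delbar_{(j,J)}u = 0$, $\deg_{z_i}(u) = n_i$, and $\Crit(u) = \vec z$ (so in particular $\Crit(u) \supset \vec z$); hence $((j,u),\vec z) \in \widetilde \CM_{g,k}(M,J;\beta;\vec n)$, and applying $\frak{forget}_{(k;\vec n)}$ exhibits $(j,u)$ as an element of $\widetilde \CM^{(k;\vec n)}_g(M,J;\beta)$, hence of the right-hand side of \eqref{eq:decompose}.

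I do not anticipate a genuine obstacle here: the entire mathematical weight is carried by the cited finiteness of $\Crit(u)$ and of the ramification degrees, after which the argument is pure bookkeeping. Two minor points deserve a word of care. First, one must invoke the somewhere-injective convention (or restrict to $\beta \neq 0$) so that constant maps, for which every point is critical, are excluded and the structure theorems apply. Second, the index $(k;\vec n)$ attached to $(j,u)$ above depends on an arbitrary ordering of $\Crit(u)$, and moreover a given curve may lie in several strata $\widetilde \CM^{(k';\vec n')}_g(M,J;\beta)$ once one allows unmarked ramification points; this is harmless, since \eqref{eq:decompose} asserts only a covering, not a disjoint partition, and the precise combinatorics of the overlaps is exactly what the later discussion surrounding \eqref{eq:forgetful} is designed to record.
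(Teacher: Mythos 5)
Your proposal is correct and follows exactly the route the paper takes: the paper states the lemma as an immediate consequence of the finiteness of the critical set and of the ramification degrees of a (somewhere injective) $J$-holomorphic map from \cite{mcduff}, \cite{sikorav}, and your argument is just a careful spelling-out of that observation together with the trivial reverse inclusion.
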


We call the $\widetilde \CM^{(k;\vec n)}_g(M,J;\beta)$ the
\emph{$(k;\vec n)$-stratum} of $\widetilde \CM_g(M,J;\beta)$. For
general $J$, the union in \eqref{eq:decompose} may not be a finite
union and the strata in $\widetilde \CM_g(M,J;\beta)$ may not be smooth.

We also define the union
$$
\widetilde \CM^{(k;\vec n)}_{g,\leq}(M,J;\beta) = \bigcup_{(k';\vec n') \le (k;\vec n)}
\widetilde \CM^{(k';\vec n')}_g(M,J;\beta)
$$
which is the closure of $\widetilde \CM^{(k;\vec n)}_g(M,J;\beta)$ in
$\widetilde \CM_g(M,J;\beta)$ in $C^\infty$ topology.

The main purpose of the present paper is to analyze the structure of
this decomposition and to establish certain stratawise
transversality for a generic choice of $J$.

\section{Higher jet evaluation transversality}
\label{sec:stratawise}

In this section, we first formulate the precise version of stratawise
transversality of higher jet evaluation maps. Then we will prove the transversality imitating the
proof of the 1-jet evaluation transversality Zhu and the present author gave in
\cite{oh-zhu}.

\subsection{Statement}

Denote by $\pi_k: \CJ_\omega \times \CF_k(M;\beta) \to \CJ_\omega \times \CF(M;\beta)$
the forgetful map and consider the fiber product
$
\CH'' \times_{\pi_k} \prod_{i=1}^k J^{n_i}_{hol}.
$
For each given $(k;\vec n)$, we consider a section
$$
\Upsilon_k^{\vec n} = \CJ_\omega \times \CF_k(M;\beta) \to
\CH'' \times_{\pi_k} \prod_{i=1}^k J^{n_i}_{hol}
$$
given by
\be\label{eq:Upsilonmn}
\Upsilon_k^{\vec n}(J,(j,u), \vec z) = (\delbar(J,(j,u)) ; (j^{n_1}_{hol}(z_1),\cdots,
j^{n_k}_{hol}(z_k))).
\ee
Denote by $o_E$ to be the zero section of any vector bundle $E$.
The following lemma immediately follows from the definition of $\Upsilon_k^{\vec n}$.

\begin{lem} For given $(k;\vec n)$, we have
\be \widetilde \CM_{g,k}(M;\beta;\vec n) = (\Upsilon_k^{\vec
n})^{-1}\left(o_{\CH'' \times_{\pi_k} \prod_{i=1}^k J^{(n_i,0)}}\right) \ee
and \be \widetilde \CM^{(k;\vec n)}_{g,\le}(M;\beta;k) =
\frak{forget}_{(k;\vec n)}\left(\widetilde \CM_{g,k}(M;\beta;\vec
n)\right). \ee
\end{lem}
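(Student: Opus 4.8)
The plan is to unwind the definitions of both sides and observe that the asserted equalities are purely formal, holding for any almost complex structure $J$ with no transversality input. For the first equality, recall that by Lemma \ref{lem:dk+1u=0} together with the preceding Lemma identifying ramification degree with holomorphic ramification degree, a $(j,J)$-holomorphic map $u$ satisfies $j^{n_i}u(z_i)=0$ if and only if its holomorphic $n_i$-jet vanishes, i.e. $j^{n_i}_{hol}u(z_i)=0$ in $\bigoplus_{\ell=0}^{n_i}H^{(\ell,0)}_{(z_i,u(z_i))}$. Therefore a point $(J,(j,u),\vec z)$ lies in $(\Upsilon_k^{\vec n})^{-1}$ of the zero section exactly when $\delbar(J,(j,u))=0$ \emph{and} $j^{n_i}_{hol}u(z_i)=0$ for each $i=1,\dots,k$; by the Lemma this second condition is equivalent to $\deg_{z_i}(u)\ge n_i$, which is precisely the defining condition of $\widetilde\CM_{g,k}(M;\beta;\vec n)$. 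So I would simply write out $(\Upsilon_k^{\vec n})^{-1}(o_{\CH''\times_{\pi_k}\prod J^{(n_i,0)}}) = \{(J,(j,u),\vec z)\mid \delbar_{j,J}u=0,\ j^{n_i}_{hol}u(z_i)=0\ \forall i\}$ and then invoke Lemma \ref{lem:dk+1u=0} iteratively (once for each $\ell\le n_i$, or rather once to pass from $\deg\ge\ell$ to $\deg\ge\ell+1$) to convert the vanishing of the full holomorphic jet into the inequality $\deg_{z_i}(u)\ge n_i$.

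One subtlety I want to flag here: Lemma \ref{lem:dk+1u=0} as stated gives, for a $(j,J)$-holomorphic $u$ with $j^ku(z_0)=0$, that $j^{k+1}u(z_0)=0$ iff $\sigma^{k+1}(J,(j,u),z_0)=0$; chaining this from $k=0$ upward shows that the vanishing of $\sigma^{\ell}(J,(j,u),z_0)$ for all $\ell\le n_i$ is equivalent to $j^{n_i}u(z_i)=0$, hence to $\deg_{z_i}(u)\ge n_i$. Since $j^{n_i}_{hol}u(z_i)$ is by definition the tuple $\bigoplus_{\ell=0}^{n_i}\sigma^{\ell}(J,(j,u),z_i)$ (with $\sigma^0$ the value $u(z_i)$, which contributes nothing since we are not prescribing where $z_i$ maps to), the equivalence of the zero locus of $\Upsilon_k^{\vec n}$ with $\widetilde\CM_{g,k}(M;\beta;\vec n)$ is immediate. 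I should double-check that the convention in the definition of $\widetilde\CM_{g,k}(M;\beta;\vec n)$ uses $\deg_{z_i}\ge n_i$ rather than $=n_i$ (the text gives both versions in different places — the displayed definition in Section \ref{sec:intro} and again near \eqref{eq:decompose}); for this lemma the correct reading is the inequality version, consistent with the stated dimension formula, and I would note this explicitly.

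For the second equality, there is essentially nothing to prove: it is the \emph{definition} of $\widetilde\CM^{(k;\vec n)}_{g,\le}(M;\beta)$ given just above, namely $\widetilde\CM^{(k;\vec n)}_{g,\le}(M,J;\beta)=\frak{forget}_{(k;\vec n)}(\widetilde\CM_{g,k}(M,J;\beta;\vec n))$ — wait, actually the text defines $\widetilde\CM^{(k;\vec n)}_{g,\le}$ as a \emph{union over predecessors} $\bigcup_{(k';\vec n')\le(k;\vec n)}\widetilde\CM^{(k';\vec n')}_g$, so the content of the second equality is the identity $\frak{forget}_{(k;\vec n)}(\widetilde\CM_{g,k}(M,J;\beta;\vec n))=\bigcup_{(k';\vec n')\le(k;\vec n)}\widetilde\CM^{(k';\vec n')}_g(M,J;\beta)$. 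This follows because the image under $\frak{forget}_{(k;\vec n)}$ of a pair $((j,u),\vec z)$ with $\deg_{z_i}(u)\ge n_i$ for $i=1,\dots,k$ is a map $u$ whose ramification profile $(k';\vec n')=\ram(u)$ satisfies $k'\ge k$ and $n'_i\ge n_i$ on the overlap — i.e. $(k';\vec n')\le (k;\vec n)$ in the sense of Definition \ref{def:order} — and conversely any such $u$ arises this way by marking $k$ of its ramification points with multiplicities at least $n_i$. So I would prove a two-sided inclusion: the forward inclusion is the observation just made about ramification profiles of forgotten maps, and the reverse inclusion amounts to choosing, for a given $u$ with $\ram(u)=(k';\vec n')\le(k;\vec n)$, the appropriate $k$-subset of its critical points to serve as $\vec z$.

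The step requiring the most care — though still routine — is the bookkeeping around the partial order and which notion of ``$\le$'' versus ``$<$'' and ``$\ge n_i$'' versus ``$= n_i$'' is in force, since the excerpt uses these notations slightly inconsistently across sections; I would settle the conventions at the outset of the proof and then the verification is a one-line unwinding of definitions plus a citation of Lemma \ref{lem:dk+1u=0}. There is no analytic or transversality obstacle here: this lemma is the purely set-theoretic identification that sets up the Fredholm problem, and the actual work (showing $\Upsilon_k^{\vec n}$ is transverse to the zero section for generic $J$) is deferred to the subsequent subsections.
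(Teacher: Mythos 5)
Your proof is correct and takes essentially the same route as the paper, which offers no argument at all beyond asserting that the lemma ``immediately follows from the definition of $\Upsilon_k^{\vec n}$'' --- i.e.\ precisely the definitional unwinding you carry out, with the vanishing of the holomorphic jets converted to the ramification-degree condition via Lemma \ref{lem:dk+1u=0}. Your explicit flagging of the $\deg_{z_i}\ge n_i$ versus $\deg_{z_i}=n_i$ discrepancy and of the $\sigma^0$ term is a useful clarification that the paper omits, but it does not constitute a different approach.
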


This leads us to study the transversality property of
$\Upsilon_k^{\vec n}$. with respect to the zero section
$o_{\CH'' \times_{\pi_k} \prod_{i=1}^k J^{(n_i,0)}}
\subset \CH'' \times_{\pi_k} \prod_{i=1}^k J^{(n_i,0)}$.

The following is the main theorem we prove in this section.

\begin{thm}\label{thm:main} There exists a subset $\JJ_\omega^{ram} \subset \JJ_\omega$
such that for any $J \in \JJ_\omega^{ram}$ and $\beta \in H_2(M)$
and $g \in \N$, the linearization
$$
D_{(J,(j,u),\vec z)}\Upsilon_k^{\vec n}: T_J\CJ_\omega \times T_{((j,u),\vec z)}\CF_k(M,\beta) \to
\CH''_{(J,(j,u))}\oplus \bigoplus_{i=1}^k J^{n_i}_{hol,(J,(j,u),\vec z)}
$$
is surjective at all
$(J,(j,u),\vec z) \in (\Upsilon_k^{\vec n})^{-1}\left(o_{\CH'' \times_{\pi_k} \prod_{i=1}^k J^{n_i}_{hol}}\right)$.
In particular the set
$$
(\Upsilon_k^{\vec n})^{-1}\left(o_{\CH''\times_{\pi_k} \prod_{i=1}^k J^{n_i}_{hol}}\right)
\subset \CJ_\omega \times \CF_k(M,\beta)
$$
is a submanifold of $\widetilde \CM_{g,k}(M;\beta)$ of codimension
$
\sum_{i=1}^k 2n n_i.
$
\end{thm}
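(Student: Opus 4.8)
The plan is to establish surjectivity of the linearization $D_{(J,(j,u),\vec z)}\Upsilon_k^{\vec n}$ by the standard Sard--Smale universal moduli space argument: it suffices to show that the image of $D\Upsilon_k^{\vec n}$ is closed and has dense complement zero, i.e. that any element $(\eta^*,\zeta_1^*,\dots,\zeta_k^*)$ of the dual (a pair consisting of an $L^2$-type dual vector to $\CH''_{(J,(j,u))}$ and dual vectors to each $J^{n_i}_{hol,(J,(j,u),\vec z)}$) which annihilates the image must vanish. First I would split the linearization according to the variables it acts on: the $\delbar$-component restricted to variations $(\xi, \delta j)$ in $T_{(j,u)}\CF$ with $\delta J = 0$ is the usual linearized Cauchy--Riemann operator $D_u\delbar$, whose cokernel is identified with the $\eta^*$ in the kernel of its formal adjoint; the $J$-direction contributes the term $\frac12(\delta J)\circ du\circ j$, and since $(j,u)$ is somewhere injective the classical McDuff--Salamon argument (varying $\delta J$ supported near an injective point where $du\ne 0$) forces $\eta^*\equiv 0$ on a nonempty open set, hence $\eta^*\equiv 0$ everywhere by unique continuation for the adjoint operator. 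So the first reduction is: \emph{we may assume $\eta^* = 0$ and must kill the remaining $\zeta_i^*$.}

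Second, with $\eta^*=0$, the constraint becomes that each $\zeta_i^*$ annihilates the image of the $j^{n_i}_{hol}$-component of the linearization at the marked point $z_i$, as the variation ranges over all $(\delta J, \xi, \delta j, \delta z)$ with $(\delta J, \xi, \delta j)$ lying in the kernel of $D_u\delbar$ (so that we stay tangent to the Cauchy--Riemann locus). The key is the off-shell nature of $\sigma^{k}$ emphasized in Remark after Lemma~\ref{lem:sigma}: the holomorphic jet $j^{n_i}_{hol}u(z_i)$ depends on $J$ through the projection $\pi^{hol}_{(j,J)}$ \emph{pointwise at $u(z_i)$}, so varying $J$ in a neighborhood of $u(z_i)$ — specifically varying the $(n_i{-}1)$-jet of $J$ at the point $u(z_i)$ along the image of $du$ — moves the holomorphic projection of $d^{n_i}u(z_i)$ freely in $H^{(n_i,0)}_{(z_i,u(z_i))} \cong T_{u(z_i)}M$, which is exactly the $2n$-dimensional fiber that $\zeta_i^*$ lives in. This is the higher-jet analogue of Lemma 2.2 of \cite{oh-zhu}; here one needs the structure theorem for distributions with point support (see \cite{gelfand}) to rule out the possibility that $\zeta_i^*$, viewed as a distribution on $\Sigma$ pulled back through the evaluation, secretly has support spread out — it must be a finite-order derivative of a delta at $z_i$, and the free $J$-variation at $u(z_i)$ then forces all of its coefficients, and in particular the top one, to vanish. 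Since the points $z_1,\dots,z_k$ are distinct, the $J$-variations near the distinct images $u(z_i)$ (again using somewhere-injectivity so that a positive-measure set of points of $\Sigma$ maps injectively, and a perturbation of the $z_i$ if necessary to land there) can be chosen independently, so each $\zeta_i^*$ is killed separately. This establishes surjectivity of $D\Upsilon_k^{\vec n}$ on the universal moduli space $(\Upsilon_k^{\vec n})^{-1}(o)$.

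Third, surjectivity on the universal space shows that $(\Upsilon_k^{\vec n})^{-1}(o) \subset \CJ_\omega \times \CF_k(M;\beta)$ is a (Banach) submanifold, and the projection to $\CJ_\omega$ is Fredholm with index equal to $\dim\widetilde\CM_{g,k}(M;\beta) - \sum_{i=1}^k 2nn_i$ since each fiber $J^{n_i}_{hol}$ has real rank $2nn_i$ (it is $\bigoplus_{\ell=0}^{n_i} H^{(\ell,0)}$, but the $\ell = 0$ component is the evaluation which is automatically cut by asking $\deg_{z_i}u \geq n_i$ only through $\ell \geq 1$; one bookkeeps so the net codimension is $\sum 2nn_i$, matching the dimension formula~\eqref{eq:index}). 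The Sard--Smale theorem then produces the second-category set $\JJ_\omega^{ram}$ of regular values, and for $J$ in this set the fiber $(\Upsilon_k^{\vec n})^{-1}(o) \cap (\{J\}\times\CF_k(M;\beta)) = \widetilde\CM_{g,k}(M,J;\beta;\vec n)$ is a smooth manifold of the stated codimension inside $\widetilde\CM_{g,k}(M;\beta)$. Finally, intersecting over the countably many $(\beta, g, k, \vec n)$ keeps the resulting set second category.

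The main obstacle I anticipate is the second step: proving that varying $J$ in its $(n_i{-}1)$-jet at $u(z_i)$ genuinely moves $\sigma^{n_i}(J,(j,u),z_i) = \pi^{hol}_{(j,J)}(d^{n_i}u(z_i))$ surjectively onto $T_{u(z_i)}M$. One must compute, in the local complex coordinates of the paragraph around \eqref{eq:complexatp0}, how $\pi^{hol}_{(j,J)}$ depends on the Taylor coefficients of $J$ at $u(z_i)$, and check that the Cauchy--Riemann constraint $D_u\delbar(\delta J,\xi,\delta j)=0$ does not obstruct the $\delta J$-directions one needs — i.e. that one can solve for a compensating $\xi$ (this is where $D_u\delbar$ being onto, equivalently the somewhere-injective hypothesis plus the cokernel-killing of Step 1, or rather a pointwise surjectivity statement, gets used). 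The delicate part is disentangling the contribution of $\delta J$ to the $\delbar$-equation from its contribution to the holomorphic projection, exactly as in the $1$-jet case of \cite{oh-zhu} but now with the combinatorics of higher symmetric powers and the distribution-theoretic input of \cite{gelfand} to handle the point-support structure.
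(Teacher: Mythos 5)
Your Step 1 contains the central gap. From the annihilation condition with $\xi=0$ you correctly get $\langle B\circ du\circ j,\eta\rangle=0$ and hence $\eta=0$ near a somewhere injective point; but unique continuation for $(D_u\delbar_{(j,J)})^\dagger$ only applies where $\eta$ actually solves the adjoint equation, and the annihilation identity \eqref{eq:coker} says $(D_u\delbar_{(j,J)})^\dagger\eta=-\sum_{i,\ell}(\del^\dagger)^\ell(\delta_{z_i}\zeta_{i;\ell})$, which vanishes only on $\Sigma\setminus\{z_1,\dots,z_k\}$. So unique continuation yields $\supp\eta\subset\{z_1,\dots,z_k\}$, \emph{not} $\eta\equiv 0$: $\eta$ could still be a nonzero finite linear combination of derivatives of delta functions at the $z_i$. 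Removing this possibility is the technical heart of the paper (Lemma \ref{lem:eta=0} and all of section \ref{sec:removal}): one applies the Gelfand--Shilov structure theorem to $\eta$ itself, uses the mixed Sobolev norm $\|\cdot\|_{\vec n+\vec 1;\vec z}$ to bound the order of the point-supported part by $n_i-1$, exploits the vanishing $j^{n_i}u(z_i)=0$ to show the zeroth-order terms $E\cdot\del\xi+F\cdot\xi$ of $D_u\delbar$ pair to zero against $\eta$, and finally tests against a corrected section $\widetilde\xi$ to conclude $\eta$ solves the adjoint equation across the marked points and is therefore continuous, hence zero. None of this appears in your proposal; you instead invoke the structure theorem for the $\zeta_i^*$, which live in the finite-dimensional fibers $J^{n_i}_{hol,(J,(j,u),z_i)}$ and have no distributional structure to analyze.

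Your Step 2 also takes a harder road than necessary and rests on a misreading of the off-shell setup. Surjectivity of the combined map $D\Upsilon_k^{\vec n}$ is dual to the vanishing of any annihilator $(\eta,\vec\zeta)$ of its full image; there is no need to restrict variations to the kernel of $D_u\delbar$, so once $\eta=0$ is (correctly) established, the condition on $\zeta_{i;\ell}$ is that it annihilates $\del^\ell\xi(z_i)$ for \emph{all} smooth $\xi$. The paper kills $\zeta_{i;\ell}$ by simply exhibiting a local polynomial section $\xi$ with $\del^\ell\xi(z_i)$ prescribed arbitrarily in $H^{(\ell,0)}_{z_i}$ --- no variation of the $(n_i-1)$-jet of $J$ at $u(z_i)$, and no disentangling of the $\delta J$-contributions, is required. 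The obstacle you flag as the main difficulty is therefore an artifact of your formulation; the genuine difficulty you have not addressed is the removal of the point-supported part of $\eta$, together with the choice of function spaces (the regularity $W^{n_i+1,p}$ locally near $z_i$, upgraded to $W^{N,p}$ at the end by elliptic regularity) that makes that argument work.
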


\subsection{Proof}

In this subsection, we give the proof of Theorem \ref{thm:main}. The
scheme of the proof is a generalization of the one used for the
1-jet transversality in \cite{oh-zhu} to the higher jets, which
however requires more sophisticated choice of function spaces in the
proof.

\medskip
\begin{proof}[Proof of Theorem \ref{thm:main}]
We consider the smooth section
$$
\Upsilon_k^{\vec n} : \CJ_\omega \times \CF_{g,k}(M,\beta) \to \CH'' \times_{\pi_k} \prod_{i=1}^k J^{(n_i,0)}
$$
defined by
$$
\Upsilon_k^{\vec n}(J,(j,u),z) = \left(\delbar(J,(j,u)), (j_{hol}^{n_1}(J,(j,u),z_1), \cdots, j_{hol}^{n_k}(J,(j,u),z_k))
\right)
$$
where $z = \{z_1, \cdots, z_k\} \in \widetilde{\Conf}(\Sigma)$.

The linearization map of $j^{n_i}_{hol}$ is the direct sum
$$
D_{(J,(j,u),z)}j^{n_i}_{hol} = \bigoplus_{\ell =1}^{n_i} D_{(J,(j,u),z_i)}\sigma^{\vec n;i}_\ell
$$
where $\sigma^{\vec n;i}_\ell : \CF_k(M,\beta) \to
H^{(\ell,0)}_{J,(j,u),z_i}$ is the evaluation of the $\ell$-th
holomorphic derivative at $z_i$ for $\ell \leq n_i$. And the map \be
D_{(J,(j,u),z)}\sigma^{n_i}_\ell  : T_J\CJ_\omega \times
T_{((j,u),\vec z)}\CF_k(M,\beta) \to H^{(k;0)}_{J,(j,u),z_i} \ee
is given by
\be\label{eq:DJjuzsigma}
D_{(J,(j,u),z)}\sigma^{n_i}_\ell(B,(b,\xi),\vec v) =
(D_{J,(j,u)}\delbar(B,(b,\xi)),
D_{(J,(j,u),\vec z)}(\sigma^{\vec n;i}_\ell)(B,(b,\xi),v_i)).
\ee
Here we have
\be\label{eq:Dsigma}
D_{(J,(j,u),\vec z)}(\sigma^{\vec n;i}_\ell)(B,(b,\xi),\vec v)) \nonumber \\
= D_{J,(j,u)}\sigma^{\vec n;i}_\ell(B,(b,\xi))(\vec z) + \nabla_{v_i} (\sigma^\ell(u))(z_i)
\ee
for $B\in T_J\CJ_{\o}, b\in T_j\CM(\S), v_i\in T_{z_i}\S$ and $\xi \in T_u\CF(M;\b)$.

Some remarks concerning the necessary Banach manifold set-up of the map
$\Upsilon$ are now in order :
\begin{enumerate}
\item To make evaluating $j^{k} u$ at a point $z \in
\Sigma$ make sense, we need to take at least $W^{k+1,p}$-completion
with $p > 2$ of $\CF_1(\b;k)$ at $z$ so that $j^{k}(u)$ lies in
$W^{1,p}$ at $z$ which is then continuous at $z$. We actually need
to take $W^{N,p}$-completion of $\CF_1(\b;k)$ with $N= N(\beta,k)$
sufficiently large so that the section $\Upsilon$ is differentiable
and that Sard-Smale theorem can be applied.
\item We provide $\CH''$ with the topology of a $W^{N,p}$ Banach bundle.
\item We also need to provide some Banach manifold structure on $\CJ_\omega$.
We can borrow Floer's scheme \cite{floer} for this whose details we
refer readers thereto.
\end{enumerate}

We now complete the tangent space $T_{((j,u),z)}\CF_k(M,\beta)$ by
the $W^{N,p}$-norm with $N$ sufficiently large so that $N$ is
at least \be\label{eq:N} N \geq \max\{3,
\max_i \{n_i + 2\mid i =1, \cdots k \}\}. \ee The choice of $N$ will
vary depending only on the homology class $\beta$ and the genus $g$.
Recall that if $u$ is in $W^{N,p}$, $D_{J,(j,u)}\sigma^{\vec
n;i}(B,(b,\xi))$ and $\nabla_{v_i} (\sigma(u))$ are in
$W^{N-n_i-1,p}$ with $N-n_i-1 \geq 1$. Therefore their evaluations
at $z_i$ are well defined since any $W^{1,p}$-map is continuous.

At fixed $(J, (u,j),\vec z)$ where we do linearization of $\Upsilon_k^{\vec n}$,
we will write
\beastar
\Omega^0_{N,p}(u^*TM) & := & W^{N,p}(u^*TM) = T_u \CF^{N,p}(\Sigma,M;\beta)\\
\Omega^{(0,1)}_{N-1,p}(u^*TM) & := & W^{N-1,p}\left(\Lambda_{(j,J)}^{(0,1)}(u^*TM)\right)
\eeastar
for the simplicity of notations.

To prove Theorem \ref{thm:main}, we need to verify that
at each given point $(J,(j,u),\vec z) \in
(\Upsilon_k^{\vec n})^{-1}\left(o_{\CH'' \times_{\pi_k} \prod_{i=1}^k J^{n_i}_{hol}}\right)$,
the system of equations
\bea%
D_{J,(j,u)}\delbar(B,(b,\xi)) & = & \gamma \label{eq:Dbar}\\
D_{J,(j,u)}(\sigma^{\vec n;i})(B,(b,\xi))(z_i)
+ \nabla_{v_i} (\sigma^{\vec n;i}_\ell(u)) & = & \zeta_{i;\ell} \nonumber\\
\mbox{for } \, \ell = 1, \cdots, n_i, \, i =1, &\cdots,& k
\label{eq:DJueta-}
\eea%
has a solution $(B,(b,\xi),\vec v)$ for each given data
$$
\gamma \in \Omega^{(0,1)}_{N-1,p}(u^*TM), \quad \zeta_{i;\ell} \in H^{(\ell,0)}_{J,(j,u),z_i}
$$
with $1 \leq \ell \leq n_i$ and $1 \leq i \leq k$. It will be enough
to consider the triple with $b = 0$ and $\vec v=0$ which we will
assume from now on.

We now compute the linearization $D_{(J,(j,u),z)}\sigma^{\vec n;i}_\ell(B,(b,\xi),\vec
v))$. We first recall that $\sigma^{\vec n;i}_\ell$ defines a
section of the pull-back of the vector bundle $H^{(\ell,0)}_1 \to
\CF_1(M;\beta)$ to $\CF_k(M;\beta)$ via the forgetful map
$\CF_k(M;\beta)\to \CF_1(M;\beta)$, and the linearization is meant
to be the covariant linearization of the section. Note that
computation of this linearization is local near $z_i \in \Sigma$,
and so we can use coordinate calculations at $z_i$ and $u(z)$ as in
section \ref{sec:jet}. By $J$-complex linearity of $\nabla$ and the
\emph{vanishing at $z_i$ of the $n_i$-jet $j^{n_i}u(z_i)$}, it is
easy to see that we have
$$
D_{(J,(j,u),\vec z)}(\sigma^{\vec n;i}_\ell)(0,(0,\xi),0))(\vec z)
=  \pi_{hol}\left((\nabla_{du})^{n_i} \xi(z_i)\right)
$$
as long as $1 \leq \ell \leq n_i$.

\begin{rem} We would like to point out that for a general map $u$
the formula for $D_{(J,(j,u),\vec z)}(\sigma^{\vec
n;i}_\ell)(0,(0,\xi),0))(\vec z)$ involve products of $\nabla_{du}^k
\xi$ and $\nabla^j u$ with $0 \leq k, j \leq n_i$ and $k+j = n_i$ in
addition to $\pi_{hol}\left((\nabla_{du})^{\ell} \xi(z_i)\right)$.
Those terms with $1 \leq j \leq n_i$ will vanish by the condition
$j^{n_i}u = 0$.
\end{rem}

Since $u$ is $(j,J)$-holomorphic, it also follows that
$$
\pi_{hol}\left((\nabla_{du})^{\ell}\xi(z_i)\right) =
(\nabla_{du}')^{\ell} \xi(z_i)
$$
where $\nabla_{du}' = \pi_{hol}\nabla_{du}$.

 Now we study solvability of
\eqref{eq:Dbar}-\eqref{eq:DJueta-} by applying the Fredholm
alternative. We regard
$$
\Omega^{(0,1)}_{N-1,p}(u^*TM) \times \prod_{i=1}^k J^{n_i}_{hol;(J,(j,u),z_i)}
$$
as a Banach space with the norm
$$
\|\cdot\|_{N-1,p} + \sum_{i=1}^k\sum_{\ell=1}^{n_i} |\cdot|_{z_i;\ell}
$$
where $|\cdot|_{z_i;\ell}$ any norm induced by an inner product on
the $2n$-dimensional vector space $H^{(\ell,0
)}_{(J,(j,u),z_i)}\cong \C^n$.

For the clarification of notations, we denote any natural pairing
$$
\Omega^{(0,1)}_{N-1,p} \times \left(\Omega^{(0,1)}_{N-1,p}\right)^*\to \R
$$
by $\langle \cdot, \cdot
\rangle$ and the inner product on $H^{(\ell,0)}_{(J,(j,u),z_i)}$ by $(\cdot,
\cdot)_{z_i}$.

\begin{rem} \label{rem:3pversus2p}
We emphasize that for the map
\be\label{eq:z0v}
\vec z \mapsto D_{J,(j,u)}\sigma^{\vec n;i}_\ell(B,(b,\xi))(z_i)
\ee
to be defined as a continuous map to $H_{z_i}^{(\ell,0)}$, the map $u$ must
be at least $W^{N,p}$ for $N \geq \max\{n_i + 1\}$ near each $z_i$.
On the other hand, as it will be clear from the discussion in section \ref{sec:removal}
we need to reduce the regularity of the completed Sobolev space from $W^{N,p}$ to
$W^{n_i+1,p}$ \emph{locally near at} each $z_i$ respectively.
\end{rem}

Due to this remark and the fact that $n_i$ vary over $i$, we will first consider the problem on
a space with regularity weaker than $W^{N,p}$ but stronger than $W^{n_i+1,p}$ \emph{locally
near at} $z_i$. In the end of
the proof, we will establish solvability of \eqref{eq:Dbar}-\eqref{eq:DJueta-}
on $W^{N,p}$ by applying an elliptic regularity result
of the equation \eqref{eq:Dbar}.

To overcome the fact that $n_i$ vary over $i$,
we fix a choice of cut-off functions $\chi = \sum_i \chi_i$ so that
$\supp \chi_i \in D_i$ and $\chi_i \equiv 1$ on $V_i \subset D_i$, and
$D_i$ are disjoint from one another. Define a
norm
\be\label{eq:mixednorm}
\|\xi\|_{\vec n + \vec 1;\vec z} = \sum_{i=1}^k \|\chi_i \xi\|_{n_i+1,p}
\ee
and the space
$$
\Omega^{0}_{\vec n + \vec 1 ;\vec z}(u^*TM) = \{\xi \in W^{2,p} \mid
\|\xi\|_{\vec n + \vec 1;\vec z} < \infty\}
$$
as the completion of $\Omega^{(0,1)}(u^*TM)$ with respect to
the norm $\|\cdot \|_{\vec n + \vec 1;\vec z}$.

With this definition, we first consider the equations
\eqref{eq:Dbar} and \eqref{eq:DJueta-}
for $\xi \in \Omega^{0}_{\vec n+\vec 1;\vec z}(u^*TM)$ when $\gamma$ lies
in $\Omega^{(0,1)}_{\vec n;\vec z}(u^*TM)$.
We will derive the solvability
for the case  $\Omega^{(0,1)}_{N-1,p}(u^*TM)$ afterwards
by applying the elliptic regularity of the equation \eqref{eq:Dbar}.

\begin{prop}\label{prop:b=v=0}
The map
\eqref{eq:DJjuzsigma}
$$
D\Upsilon_k^{\vec n}: T_J \JJ_\omega \times \Omega^{0}_{\vec n + \vec 1;\vec z}(u^*TM) \to
\Omega^{(0,1)}_{\vec n;\vec z}(u^*TM) \times \prod_i J^{n_i}_{hol;J,(j,u),z_i} := \CB
$$
restricted to the elements of the form $(B,(0,\xi),0)$ is onto at
any $(J,(u,j),\vec z)$ that lies in $(\Upsilon_k^{\vec
n})^{-1}(o_{\CH^{''}}\times_{\pi_k} o_{\prod_i J^{n_i}_{hol}}) =
\widetilde\MM_{g,k}(M,\beta;\vec n)$.
\end{prop}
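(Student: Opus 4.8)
The plan is to argue by the Fredholm alternative (Hahn--Banach plus a unique continuation / elementary representation-theoretic argument), exactly parallel to the $1$-jet case in \cite{oh-zhu}. First I would observe that $D_{J,(j,u)}\delbar$ restricted to $\xi \mapsto D_{J,(j,u)}\delbar(0,(0,\xi))$ is a Cauchy--Riemann type operator on $u^*TM$, hence Fredholm; adding the $B$-variable (the deformation of $J$) only enlarges the image, so the component of $D\Upsilon_k^{\vec n}$ landing in $\Omega^{(0,1)}_{\vec n;\vec z}(u^*TM)$ has closed image of finite codimension. Together with the finite-dimensionality of $\prod_i J^{n_i}_{hol;J,(j,u),z_i}$, this shows $D\Upsilon_k^{\vec n}$ (restricted to triples $(B,(0,\xi),0)$) has closed image in $\CB$. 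Therefore it suffices to show the image is \emph{dense}, i.e.\ that any element of the dual $\CB^*$ annihilating the image vanishes.

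So suppose $(\eta, (\zeta_1^*,\dots,\zeta_k^*)) \in \big(\Omega^{(0,1)}_{\vec n;\vec z}(u^*TM)\big)^* \oplus \prod_i \big(J^{n_i}_{hol;J,(j,u),z_i}\big)^*$ pairs to zero with $D\Upsilon_k^{\vec n}(B,(0,\xi),0)$ for all $B$ and all admissible $\xi$. Setting $\xi = 0$ and varying $B \in T_J\CJ_\omega$ gives, as in \cite[Lemma 2.2]{oh-zhu} and the standard McDuff--Salamon argument, that $\langle \eta, D_{J,(j,u)}\delbar(B,(0,0))\rangle = 0$ forces $\eta$ to be supported on the (finite, by \cite{mcduff}, \cite{sikorav}) set of non-injective or critical points of $u$ --- in particular on $\{z_1,\dots,z_k\}$ together with the finitely many other ramification points. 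The key new point is that, because our function space $\Omega^{(0,1)}_{\vec n;\vec z}$ has only $W^{n_i}$-type regularity locally near $z_i$, its dual contains distributions of point support at $z_i$ of order up to $n_i - 1$; by the structure theorem for distributions with point support (see \cite{gelfand}), $\eta$ near $z_i$ is a finite sum $\eta = \sum_{|\alpha| \le n_i - 1} c_{i,\alpha}\, \del^\alpha \delta_{z_i}$ of derivatives of the delta function, plus a smooth piece on the complement. Then I would feed in the explicit linearization computed above, namely
$$
D_{(J,(j,u),\vec z)}(\sigma^{\vec n;i}_\ell)(0,(0,\xi),0) = (\nabla_{du}')^{\ell}\xi(z_i),
$$
together with $\xi$'s chosen to have prescribed jet data at each $z_i$ (localized by the cut-offs $\chi_i$, which is exactly why the norm \eqref{eq:mixednorm} was introduced). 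The pairing condition becomes a linear system relating the coefficients $c_{i,\alpha}$ of $\eta$ and the covectors $\zeta_i^*$; since $\nabla_{du}'$ is (near $z_i$, after the $C^\infty$ coordinate normalization of Section \ref{sec:jet}) essentially the holomorphic derivative $\del/\del z$, the operators $(\nabla_{du}')^1,\dots,(\nabla_{du}')^{n_i}$ applied to the free jet of $\xi$ at $z_i$ hit all of $\bigoplus_{\ell=1}^{n_i} H^{(\ell,0)}_{(J,(j,u),z_i)}$ independently. Choosing $\xi$ supported away from all $z_j$ first kills the smooth-on-the-complement part of $\eta$ by the usual argument; then choosing $\xi$ with arbitrary prescribed $\le n_i$-jet at $z_i$ and vanishing at the other marked points forces both $\zeta_i^* = 0$ and all $c_{i,\alpha} = 0$. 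Hence $\eta = 0$ and all $\zeta_i^* = 0$, proving density and therefore surjectivity.

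The main obstacle I expect is the bookkeeping in the last step: one must verify that the map sending the $\le n_i$-jet of $\xi$ at $z_i$ to the tuple $\big((\nabla_{du}')\xi(z_i),\dots,(\nabla_{du}')^{n_i}\xi(z_i)\big)$ together with the ``lower-order'' pairing against $\del^\alpha\delta_{z_i}$ for $|\alpha|\le n_i-1$ is \emph{jointly} surjective onto the appropriate space --- i.e.\ that the holomorphic-jet directions and the point-distribution directions do not interfere. This is where the precise choice of the mixed norm $\|\cdot\|_{\vec n+\vec 1;\vec z}$ and the disjointness of the cut-off supports $D_i$ are essential, and where Lemma \ref{lem:dk+1u=0} (which identifies vanishing of the holomorphic jet with vanishing of the full jet for $(j,J)$-holomorphic $u$) is used to reduce everything to the holomorphic components. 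A secondary technical point, deferred to Section \ref{sec:removal}, is the passage from solvability on $\Omega^{(0,1)}_{\vec n;\vec z}$ back to the original $W^{N,p}$ space, which follows from interior elliptic regularity of \eqref{eq:Dbar} since away from the $z_i$ the solution $\xi$ already inherits full $W^{N,p}$ regularity from $\gamma$.
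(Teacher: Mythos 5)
Your proposal is correct in outline and follows the same overall strategy as the paper: closed range from ellipticity of the Cauchy--Riemann operator plus finite-dimensionality of the jet spaces; density via the annihilator $(\eta,\vec\zeta)$; localization of $\supp\eta$ by varying $B$ near somewhere-injective points and unique continuation; and then the structure theorem for point-supported distributions together with the observation that $\eta\in(W^{n_i,p})^*$ forces the order of the delta-derivatives at $z_i$ to be at most $n_i-1$. (One small correction: unique continuation kills $\eta$ on all of $\Sigma\setminus\{z_1,\dots,z_k\}$, including the unmarked critical points, since $\eta$ satisfies the adjoint equation there; the support is exactly the set of marked points.) Where you diverge from the paper is precisely at the step you flag as the main obstacle, the joint vanishing of the coefficients $c_{i,\alpha}$ and the $\zeta_{i;\ell}$. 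The paper resolves this in Section \ref{sec:removal} (Lemma \ref{lem:eta=0}) in two moves: first it shows $\langle D_u\delbar_{(j,J)}\xi,\eta\rangle=\langle \delbar_\chi\xi,\eta\rangle$, using Lemma \ref{lem:nablak} --- the zeroth-order coefficients $E,F$ of $D_u\delbar_{(j,J)}$ vanish to order $n_i$ at $z_i$ because $j^{n_i}u(z_i)=0$, so pairing them against a delta-derivative of order $\le n_i-1$ gives zero after integration by parts; then it substitutes $\widetilde\xi=\xi-\sum_{i,\ell}\chi_i(z)(z-z_i)^{\ell}a_{i;\ell}(z_i)/\ell!$, which has the same $\delbar$ near each $z_i$ but vanishing holomorphic jets there, to conclude $\langle D_u\delbar_{(j,J)}\xi,\eta\rangle=0$ for all $\xi$. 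Hence $\eta$ is a global distributional solution of the adjoint equation, so continuous, so (being point-supported) zero, and only afterwards are the $\zeta_{i;\ell}$ killed by solving $\del^{\ell}\xi(z_i)=\zeta_{i;\ell}$. Your more direct linear-algebra version does work, and for the same underlying reason: after the reduction to $\delbar_\chi$, the point-distribution part of the functional sees only jet components of $\xi$ carrying at least one antiholomorphic derivative, which are independent of the pure holomorphic jets $\del^{\ell}\xi(z_i)$. But that reduction --- eliminating the $E\cdot\del\xi+F\cdot\xi$ contributions via the order bound on the delta-derivatives --- is the substantive content you would still need to write out; it is not automatic from the cut-off construction alone.
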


\begin{proof}
To prove the surjectivity, we will prove that the image of
$D_{J,(j,u),\vec z}\Upsilon_k^{\vec n}$ is dense and closed in
$\CB$.

We start with the denseness. Let $(\eta, \vec \zeta) \in
(\Omega^{(0,1)}_{N-1,p}(u^*TM))^* \times \prod_{i=1}^k
J^{n_i}_{hol;(J,(j,u),z_i)}$ for $\vec \zeta = (\zeta_{i;\ell})$
such that \be\label{eq:0} \langle
D_{J,(j,u)}\delbar_{j,J}(B,(0,\xi)), \eta \rangle +
\sum_{i=1}^k\sum_{\ell=1}^{n_i} \left(D_{J,(j,u)}\sigma^{\vec
n;i}_\ell(B,(0,\xi))(z_i), \zeta_{i;\ell}\right)_{z_i}= 0 \ee for
all $\xi \in \Omega^0_{\vec n +\vec 1,p}(u^*TM)$ and $B$. It will be
enough to consider smooth $\xi$'s in our consideration of
\eqref{eq:0} since $\Omega^0(u^*TM) \hookrightarrow \Omega_{\vec
n;\vec z}^0(u^*TM)$ is dense. Under this assumption, we would like
to show that $\eta = 0 = \zeta_{i,\ell}$.

By the above discussion on $D_{J,(j,u)}\delbar(B,(0,\xi))$
and $D_{J,(j,u)}\sigma^{\vec n;i}_\ell(B,(0,\xi))(z_i)$, \eqref{eq:0} is equivalent to
\be\label{eq:simplifiedcoker}%
\langle D_u\delbar_{(j,J)} \xi+ \frac{1}{2}B\circ du\circ j, \eta \rangle
+ \sum_{i,\ell} \langle (\nabla_{du}')^\ell\xi, \delta_{z_i} \zeta_{i,\ell}\rangle =0
\ee%
for all $B$ and $\xi$ of $C^{\infty}$ where $\delta_{z_0}$ is the
Dirac-delta function.

\begin{lem}\label{lem:highd}
Suppose $j^{n_i}u(z_i) = 0$. Then we have
\be\label{eq:nabladu'}
(\nabla_{du}')^{\ell} \xi (z_i)= \del^{\ell} \xi (z_i)
\ee
for all $1 \leq \ell \leq n_i$ and $i =1, \cdots, k$
where $\del$ is the Dolbeault differential with respect to $(j,J)$.
\end{lem}

Taking $B=0$ in \eqref{eq:0} and applying Lemma \ref{lem:highd}, we obtain
\be \label{eq:coker} %
\langle D_u\delbar_{(j,J)} \xi, \eta \rangle +  \sum_{i,\ell}\langle
\del^{\ell} \xi, \delta_{z_i} \zeta_{i,\ell}\rangle = 0 \quad \mbox{
for all $C^\infty$ section $\xi$}.
\ee %
Therefore by definition of the distribution derivatives, $\eta$ satisfies
$$
(D_u\delbar_{(j,J)})^\dagger \eta + \sum_{i,\ell} (\del^\dagger)^{\ell}(\delta_{z_i} \zeta_{i;\ell}) = 0
$$
as a distribution, i.e.,
$$
(D_u\delbar_{(j,J)})^\dagger \eta = - \sum_{i,\ell} (\del^\dagger)^{\ell}(\delta_{z_i} \zeta_{i;\ell})
$$
where $(D_u \delbar_{j,J})^\dagger$ is the formal adjoint of $D_u
\delbar_{j,J}$ whose symbol is the same as $D_u\del_{j,J}$ and so
 first order differential operator. We also recall that the adjoint
$\del^\dagger$ of $\del$ is a first order elliptic operator which has the same
principal symbol as $- \delbar$.
Since $\supp (\del^\dagger)^{\ell}(\delta_{z_i}
\zeta_{i,\ell}) \subset \{z_1, \cdots, z_k\}$, we have $(D_u\delbar_{(j,J)})^\dagger
\eta = 0$ on $\Sigma \setminus \{z_1, \cdots, z_k\}$ as a distribution. Then by
the elliptic regularity (see Theorem 13.4.1 \cite{Ho} for example),
$\eta$ must be smooth on $\Sigma \setminus \{z_1, \cdots, z_k\}$.

On the other hand, by setting $\xi = 0$ in
\eqref{eq:simplifiedcoker},  we get
\be \label{eq:1/2B} \langle
B\circ du\circ j, \eta \rangle=0
\ee
for all $B\in T_{J}
\CJ_\omega$. From this identity, standard argument from
\cite{floer}, \cite{mcduff} shows that $\eta=0$ in a small
neighborhood of any somewhere injective point in $\Sigma \setminus
\{z_0\}$. Such a somewhere injective point exists by the hypothesis
of $u$ being somewhere injective (see Notation in the introduction)
and the fact that the set of somewhere injective points is open and
dense in the domain under the hypothesis (see \cite{mcduff}). Then
by the unique continuation theorem, we conclude that $\eta = 0$ on $
\Sigma \backslash \{z_1, \cdots, z_k\}$ and so the support of $\eta$
as a distribution on $\Sigma$ is contained at the subset $\{z_1,
\cdots, z_k\}$ of $\Sigma$.

We will postpone the proof of the following lemma till section
\ref{sec:removal}.

\begin{lem} \label{lem:eta=0}  $\eta$ is a distributional solution of
$(D_u \delbar_{j,J})^\dagger \eta = 0$ on $\Sigma$
and so continuous.
In particular, we have $\eta = 0$ in $\left(\Omega^{(0,1)}_{(N-1,p)}(u^*TM)\right)^*$.
\end{lem}

Once we know $\eta = 0$, the equation \eqref{eq:0} is reduced to
\be\label{eq:simple0}
\sum_{i=1}^k \sum_{\ell =1}^{n_i}\left(D_{J,(j,u)}\sigma(B,(0,\xi))(z_i), \zeta_{i;\ell}\right)_{z_i} = 0
\ee
It remains to show that $\zeta_{i;\ell} = 0$. By considering $\xi$ supported in
a disjoint union of small neighborhoods of $z_i$'s, we obtain
$$
\sum_{\ell=1}^{n_i} \left(D_{J,(j,u)}\sigma(B,(0,\xi))(z_i), \zeta_{i;\ell}\right)_{z_i} = 0
$$
for all such $\xi$. Therefore to show $\zeta_{i;\ell} = 0$ for all $1 \leq \ell \leq n_i$
at each $i$, we have only to
show that the image of the evaluation map
$$
\xi \mapsto \sum_{\ell=1}^{n_i} D_{J,(j,u)}\sigma^{\vec n;i}_\ell (0,(0,\xi))(z_i) =
\sum_{\ell=1}^{n_i} \del^{\ell} \xi(z_i)
$$
is surjective onto $J^{n_i}_{hol;J,(j,u),z_i}$. First of all, we note that the
$\ell$-th holomorphic jets of \emph{smooth} $\xi$ for $\ell =1, \cdots, n_i$
are functionally independent and can be chosen freely separately. This reduces the
surjectivity question order by order.

We focus on the $\ell$-th jet for each given $\ell =1, \cdots, n_i$.
To show this surjectivity, we need to prove the existence of $\xi$ satisfying
\be\label{eq:delunell}%
\del^{\ell} \xi (z_i) = \zeta_{i;\ell}
\ee%
at $z_i$ for any given $\zeta_{i;\ell} \in H^{(\ell,0}_{z_i}$.
We can multiply a cut-off function $\chi$ to $\zeta_{i;\ell}$ with
$\chi \equiv 1$ to make $\zeta(z): = \chi(z) \zeta_{i;\ell}$
and we may assume $\zeta$ is supported in a
sufficient small neighborhood around $z_i$. If we write
$$
\sigma^\ell(J,(j,u),z_i) = \vec a \cdot dz^{\otimes \ell}
$$
with $\vec a(z) = \frac{\del^{\ell} u}{\del z^{\ell}}(z_i)$ and
$$
\zeta_{i;\ell}(z_i) = b(z_i)\cdot dz^{\otimes \ell},
$$
\eqref{eq:delunell} is reduced to the equation
$$
\frac{\del^{\ell} \xi}{\del z^{\ell}}(z_i) = b(z_i).
$$
By simply integrating this equation, we solve this equation in some
neighborhood around $z_0$, which in turn solves $\del^{\ell}\xi(z_i)
= \zeta_{i,\ell}$. This finishes the proof of existence of a
solution to (\ref{eq:delunell}) and hence to \eqref{eq:DJueta-}.
This then proves that the image of \eqref{eq:DJjuzsigma} with $v=0$
is \emph{dense} in \be\label{eq:OmegaJ} \Omega^{(0,1)}_{\vec n;\vec
z}(u^*TM) \times \prod_i^k J^{n_i}_{hol;(J,(j,u),z_i)} \ee as a map
from $T_J\JJ_\omega \times \Omega^0_{\vec n+\vec 1;\vec
z,p}(u^*TM)$. On the other hand by the elliptic regularity, it
follows that for any fixed $J$, the image of $D_{J,(j,u)}\delbar$
from $T_{(u,j)}\CF_{g,k}(\beta;\vec n)$ is \emph{closed} in
$\Omega^{(0,1)}_{\vec n;\vec z}(u^*TM)$. We also note that
$J^{\ell}_{hol;J,(j,u),z_i}$ is a finite dimensional vector space.
These imply that the image of \eqref{eq:DJjuzsigma} is closed in
\eqref{eq:OmegaJ}. Therefore the map \eqref{eq:delunell} is onto
$\Omega^{(0,1)}_{\vec n;\vec z}(u^*TM) \times \prod_{i=1}^k
J^{n_i}_{hol;J,(j,u),z_0}$ as a map from $T_J\JJ_\omega \times
\Omega^0_{\vec n+\vec 1;\vec z}(u^*TM)$. This finishes the proof of
Proposition \ref{prop:b=v=0}.
\end{proof}

Now we finally go back to the study of \eqref{eq:Dbar}-\eqref{eq:DJueta-}
for the case of
$$
\gamma \in \Omega^{(0,1)}_{N-1,p}(u^*TM) \subset
\Omega^{(0,1)}_{1,p}(u^*TM).
$$
We recall $N \geq 3$. By the above analysis of the linearization
$D\Upsilon_k^{\vec n}$ on $\Omega^{(0,1)}_{\vec n;\vec z}$, we can
find a solution $(B,(0,\xi),0)$ of
\eqref{eq:Dbar}-\eqref{eq:DJueta-} with $B \in T_J\JJ_\omega$ and
with \emph{$\xi$ as an element in $\Omega^0_{\vec n+\vec 1;\vec
z}(u^*TM)$} for any $\vec \zeta = (\zeta_{i;\ell})$. Applying
elliptic regularity to the equation \eqref{eq:Dbar}, we derive that
$\xi$ indeed lies in $W^{N,p}$ if $\gamma \in W^{N-1,p}$, and hence
in $\Omega^0_{N,p}(u^*TM)$.

Therefore the map $\eqref{eq:DJjuzsigma}$ is onto, i.e., $\Upsilon_k^{\vec n}$ is transverse to
the submanifold
$$
o_{\CH'' \times_{\pi_k} \prod_{i=1}^kJ^{n_i}_{hol}}\subset \CH''
\times_{\pi_k} \prod_{i=1}^kJ^{n_i}_{hol}.
$$
This finishes the proof of Theorem \ref{thm:main}.
\end{proof}

\medskip

It follows from definition that
$$
\widetilde \CM_{g,k}(M,\beta;\vec n)
= (\Upsilon_k^{\vec n})^{-1}\left( o_{\CH'' \times_{\pi_k} \prod_{i=1}^kJ^{n_i}_{hol}}\right)
$$
and we have the natural projection
$$
\pi:\widetilde \CM_{g,k}(M,\beta;\vec n) \to \CJ_\omega.
$$
Then we have
$$
\widetilde \CM_{g,k}(M,J;\beta;\vec n)=\widetilde \CM_{g,k}(M,\beta;\vec n)
\cap \pi^{-1}(J).
$$
We denote
$$
\CJ_\omega^{g,k,\vec n} = \mbox{the set of regular values of $\pi$}
$$

An immediate corollary of this proposition and the discussion in section \ref{sec:Fredholm} is

\begin{cor} For any $J \in \CJ_\omega^{g,k,\vec n}$, $\widetilde \CM_{g,k}(M,J;\beta;\vec n)$
is a smooth manifold of $\widetilde \CM_{g,k}(M,J;\beta)$ of
codimension $(\sum_{i=1}^k 2nn_i)$.
\end{cor}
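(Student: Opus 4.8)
The plan is to deduce the Corollary from Theorem \ref{thm:main} by the standard Sard--Smale argument, so there is essentially nothing new beyond bookkeeping.

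First, Theorem \ref{thm:main} asserts that the \emph{universal} section $\Upsilon_k^{\vec n}$ over $\CJ_\omega \times \CF_k(M;\beta)$ is transverse to the zero section $o_{\CH'' \times_{\pi_k} \prod_{i=1}^k J^{n_i}_{hol}}$. Hence the universal moduli space
$$
\widetilde\CM_{g,k}(M,\beta;\vec n) \;=\; (\Upsilon_k^{\vec n})^{-1}\Big(o_{\CH'' \times_{\pi_k} \prod_{i=1}^k J^{n_i}_{hol}}\Big)
$$
is a smooth Banach submanifold of $\CJ_\omega \times \CF_k(M;\beta)$, with respect to the $W^{N,p}$-completions fixed in the proof of Theorem \ref{thm:main}. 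I would then observe that the restriction of the projection $\pi_k : \CJ_\omega \times \CF_k(M;\beta) \to \CJ_\omega$ to this submanifold is a Fredholm map: the linearization of $\Upsilon_k^{\vec n}$ in the $\CF_k(M;\beta)$-directions alone is $D_u\delbar_{(j,J)}$ (together with the deformations of $j$ and of the marked points) coupled with the finite-rank jet evaluations $\sigma^{\vec n;i}_\ell$, which is Fredholm of index $\dim\widetilde\CM_{g,k}(M,J;\beta) - \sum_{i=1}^k 2nn_i$ by \eqref{eq:index} and the fact that each fibre $J^{n_i}_{hol;(J,(j,u),z_i)}$ has dimension $2nn_i$. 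That $\pi_k|_{\widetilde\CM_{g,k}(M,\beta;\vec n)}$ is of class $C^q$ for $q$ as large as needed is guaranteed by the choice of $N=N(\beta,g)$ subject to \eqref{eq:N} and Floer's $C^\varepsilon$-construction on $\CJ_\omega$.

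Next I would apply the Sard--Smale theorem to this Fredholm projection. Its set of regular values is by definition $\CJ_\omega^{g,k,\vec n}$, and it is of second category in $\CJ_\omega$; moreover, for $J \in \CJ_\omega^{g,k,\vec n}$ the fibre $\pi_k^{-1}(J) = \widetilde\CM_{g,k}(M,J;\beta;\vec n)$ is a smooth manifold of dimension equal to the Fredholm index, namely $\dim\widetilde\CM_{g,k}(M,J;\beta) - \sum_{i=1}^k 2nn_i$. Finally, for the codimension statement I would invoke the elementary fact that if a section $F$ of a bundle over $A\times B$ is transverse to the zero section and $a\in A$ is a regular value of the projection $F^{-1}(o)\to A$, then the restricted section $F(a,\cdot)$ over $B$ is itself transverse to the zero section (at each $(a,b)\in F^{-1}(o)$ one combines surjectivity of $DF_{(a,b)}$ with surjectivity of $D\pi_A$ on $T_{(a,b)}F^{-1}(o)$). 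Applying this with $A=\CJ_\omega$, $B=\CF_k(M;\beta)$ and $F=\Upsilon_k^{\vec n}$, the fixed-$J$ section $\Upsilon_k^{\vec n}(J,\cdot)$ is transverse to the zero section of $\CH''\times_{\pi_k}\prod_{i=1}^k J^{n_i}_{hol}$; in particular its $\CH''$-component is transverse to $o_{\CH''}$, so $\widetilde\CM_{g,k}(M,J;\beta) = \{\delbar_{(j,J)}u=0\}$ is a smooth manifold, and $\widetilde\CM_{g,k}(M,J;\beta;\vec n)$ is obtained from it as the transverse zero locus of the section $(j^{n_1}_{hol}(z_1),\dots,j^{n_k}_{hol}(z_k))$ of the finite-rank bundle $\prod_{i=1}^k J^{n_i}_{hol}$ of total rank $\sum_{i=1}^k 2nn_i$. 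This gives exactly the asserted codimension.

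The main point requiring care is not this last deduction, which is routine, but the Banach-space bookkeeping already implicit in the proof of Theorem \ref{thm:main}: one must check that the Fredholm index of $\pi_k|_{\widetilde\CM_{g,k}(M,\beta;\vec n)}$ does not depend on the auxiliary regularity $N$ nor on the mixed norms $\|\cdot\|_{\vec n+\vec 1;\vec z}$ used there — which follows from elliptic regularity for $\delbar$, the same bootstrap invoked at the end of that proof — and that the regularity of $\Upsilon_k^{\vec n}$ is high enough for Sard--Smale to apply, which is precisely the role of the lower bound \eqref{eq:N} on $N$.
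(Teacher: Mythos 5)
Your proposal is correct and follows exactly the route the paper intends: the paper defines $\CJ_\omega^{g,k,\vec n}$ as the set of regular values of the projection $\pi:\widetilde\CM_{g,k}(M,\beta;\vec n)\to\CJ_\omega$ and treats the corollary as an immediate Sard--Smale/parametric-transversality consequence of Theorem \ref{thm:main}, its stated ``proof'' being only the one-line remark that each factor $2nn_i$ is the rank of the jet bundle $J^{n_i}_{hol}$ at $z_i$. You have simply written out in full the standard argument (universal moduli space is a Banach submanifold, the projection is Fredholm, regular values give smooth fibres of the expected dimension, and surjectivity of the full linearization restricts to transversality of the finite-rank jet section over the zero locus of the $\delbar$-component) that the paper leaves implicit.
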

\begin{proof} Here each $2nn_i$ comes from the vanishing of $n_i$ derivatives
at a marked point $z_i$ and $-2$ comes from the location of marked point
$z_i$ in $\Sigma$.
\end{proof}

Now we set
$$
\CJ_\omega^{ram} = \bigcap_{g\in \Z_{\geq 0}}\bigcap_{k \in \N}\bigcap_{\vec n \in \N^k} \CJ_\omega^{g,k,\vec n}.
$$
Obviously $\CJ_\omega^{ram} $ is a subset of $\subset \CJ_\omega$ of second category.

\section{Removal of singularity : Proof of Lemma \ref{lem:eta=0}}
\label{sec:removal}

In this section, we prove Lemma \ref{lem:eta=0}. Our primary goal
is to prove
\be\label{eq:tildexi} \langle D_u\delbar_{(j,J)} \xi,
\eta \rangle = 0
\ee
for all smooth $\xi \in \Omega^0(u^*TM)$,
i.e., $\eta$ is a distributional solution of
$(D_u\delbar_{(j,J)})^\dagger \eta = 0$ \emph{on the whole
$\Sigma$}, not just on $\Sigma \setminus \{z_1, \cdots, z_k\}$ which was shown
in section \ref{sec:stratawise}.
In addition, $\eta$ is a continuous linear functional on
$\Omega^{(0,1)}_{N-1,p}(u^*TM)$.

We start with \eqref{eq:coker}, which is
\be\label{eq:Tgt}%
\langle D_u\delbar_{(j,J)} \xi, \eta \rangle
+ \sum_{i,\ell} \langle \del^{\ell} \xi, \delta_{z_i} \zeta_{i,\ell}\rangle =0
\ee%
for all $\xi$ of $C^{\infty}$. We first  simplify the expression of
the pairing $\langle D_u\delbar_{(j,J)} \xi, \eta \rangle$ knowing
that $\supp \eta \subset \{z_1,\cdots,z_k\}$. Recall the well-known
computation \be\label{eq:formula} D_{J,(j,u)}\delbar(0,(0,\xi)) =
D_u \delbar_{j,J}\xi = (\nabla_{du} \xi)^{(1,0)}_{(j,J)} +
T^{(1,0)}_{(j,J)}(du, \xi) \ee with respect to a $J$-complex
connection $\nabla$ and its torsion tensor $T$. Here we denote
$$
T^{(1,0)}_{(j,J)}(du,\xi) = \frac{1}{2}\left(T(du,\xi) + J T(du \circ j, \xi)\right).
$$
From this it follows that
\be\label{eq:nablakT}
\nabla_{du}^{\ell}\left(T^{(1,0)}_{(j,J)}(du,\xi)\right) (z_i) = 0
\ee
for all $0 \leq \ell \leq n_i - 1$, provided $j^{n_i}u(z_i) = 0$.

Now we simplify the expression of $(\nabla_{du} \xi)^{(0,1)}_{(j,J)}$
in complex coordinates $z$ at $z_0$.
Let $x_0=u(z_0)$, and identify a
neighborhood of $z_0$ with an open subset of $\C$ and a neighborhood
of $x_0$ with an open set in $T_{x_0}M$. Then if we
identify $(T_{x_0} M,J_{x_0}) \cong \C^n$, we can write the
operator
\be\label{eq:nabladuxi}
(\nabla_{du} \xi)^{(0,1)}_{(j,J)}
= \delbar \xi + C \cdot\del \xi + D \cdot \xi,
\ee
where in a neighborhood of $z_0$, $\del, \, \delbar$ are the
standard Cauchy-Riemann operators on $\C^n$ and $C =C(x), \, D = D(x)$ are smooth pointwise
(matrix) multiplication operators whose coefficients depend only on
$M$ and $J$ and satisfies
\be\label{eq:CD}
C(x_0) = 0 =  D(x_0), \quad x_0 = u(z_0).
\ee
Adding \eqref{eq:formula} and
\eqref{eq:nabladuxi}, we can write
$$
D_{J,(j,u)}\delbar(0,(0,\xi)) = \delbar \xi + E\cdot \del \xi + F \cdot \xi
$$
(See \cite{sikorav}, \cite{oh-zhu}.) The following is a simple consequence of
the chain rule and \eqref{eq:nablakT} and \eqref{eq:CD}.

\begin{lem}\label{lem:nablak}
Suppose $j^{n_i} u(z_0) = 0$ and
Let $E = E(u(z)), \, F = F(u(z))$ in the above formula.
Then we have
\be\label{eq:nablakExi}
\nabla_{du}^k(E \cdot \xi )(z_0) = 0 = \nabla_{du}^k(F \cdot \xi)(z_0)
\ee
for all $0 \leq k \leq n_i$.
\end{lem}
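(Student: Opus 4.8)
The plan is to prove \eqref{eq:nablakExi} by induction on $k$, using the Leibniz rule for the $J$-complex connection $\nabla$ applied repeatedly to the products $E\cdot\xi$ and $F\cdot\xi$, and then invoking the vanishing condition $j^{n_i}u(z_0)=0$ together with \eqref{eq:CD}. The point is that $E$ and $F$ are built out of the coefficient matrices $C,D$ (coming from $(\nabla_{du}\xi)^{(0,1)}$) and the torsion-type term $T^{(1,0)}_{(j,J)}(du,\cdot)$, both evaluated along $u$, and all of these vanish to high order at $z_0$ once $u$ does.

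First I would expand $\nabla_{du}^k(E\cdot\xi)(z_0)$ by the Leibniz rule into a sum of terms of the form $\nabla_{du}^a E\cdot \nabla_{du}^b\xi$ with $a+b=k$. For the torsion contribution to $E$, identity \eqref{eq:nablakT} already gives $\nabla_{du}^a\big(T^{(1,0)}_{(j,J)}(du,\cdot)\big)(z_0)=0$ for $0\le a\le n_i-1$; I would note that in fact, since $du(z_0)=0$ (as $j^{n_i}u(z_0)=0$ with $n_i\ge 1$), the relevant coefficient even vanishes for $a=n_i$, because one derivative still hits the factor $du$ or produces a matrix evaluated at $x_0=u(z_0)$ where it vanishes by \eqref{eq:CD}. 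For the $C,D$ contribution to $E$, the matrices are of the form $C(u(z)),D(u(z))$; by the chain rule $\nabla_{du}^a\big(C(u(z))\big)(z_0)$ is a universal polynomial in $d^1u(z_0),\dots,d^au(z_0)$ and in the derivatives of $C$ at $x_0$, and since $C(x_0)=0=D(x_0)$ every such term either carries a factor $d^\ell u(z_0)$ with $1\le\ell\le n_i$ (hence vanishes by $j^{n_i}u(z_0)=0$) or is the zeroth-order term $C(x_0)=0$. Thus $\nabla_{du}^a E(z_0)=0$ for all $0\le a\le n_i$, and since in every Leibniz term at least the factor $\nabla_{du}^a E$ appears with $a\le k\le n_i$, the whole sum vanishes at $z_0$; the same argument applies verbatim to $F\cdot\xi$.

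The main obstacle is purely bookkeeping: one must be careful that the Leibniz/chain-rule expansion of $\nabla_{du}^k(E\cdot\xi)$ really does put \emph{every} appearance of the matrix-valued factors $E,F$ (or their covariant derivatives along $du$) into a form that is a universal expression in $d^{\le k}u$ and the jet of the structure functions at $x_0$, so that the two vanishing inputs—$j^{n_i}u(z_0)=0$ and \eqref{eq:CD}—can be applied term by term. This is the same mechanism used to derive \eqref{eq:nablakT}, and I would simply remark that \eqref{eq:nablakExi} follows from \eqref{eq:nablakT} and \eqref{eq:CD} by the chain rule exactly as stated, recording the one extra observation that the range of $k$ can be pushed from $n_i-1$ up to $n_i$ because $du(z_0)=0$ kills the single surviving term. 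No genuinely new analytic input is needed beyond what was used for Lemma \ref{lem:highd} and the computation \eqref{eq:nablakT}.
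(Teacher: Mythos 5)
Your route is exactly the one the paper has in mind: the paper offers no written proof beyond the remark that the lemma is ``a simple consequence of the chain rule and \eqref{eq:nablakT} and \eqref{eq:CD},'' and your Leibniz expansion into terms $\nabla_{du}^a E\cdot\nabla_{du}^b\xi$ with $a+b=k$, together with the chain-rule analysis of $\nabla^a(C\circ u)$ and $\nabla^a(D\circ u)$ at $z_0$ (every term carries either $C(x_0)=D(x_0)=0$ or a factor $d^\ell u(z_0)$ with $1\le\ell\le a\le n_i$), is the correct and complete argument for the contribution of $C$ and $D$, for all $0\le k\le n_i$.

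The one step that does not go through as written is your treatment of the torsion contribution to $F$ at the endpoint $k=n_i$. Expanding $\nabla_{du}^{n_i}\bigl(T^{(1,0)}_{(j,J)}(du,\xi)\bigr)(z_0)$ by Leibniz produces, among others, the term in which \emph{all} $n_i$ derivatives fall on the factor $du$, namely $T^{(1,0)}_{x_0}\bigl(\nabla^{n_i}(du)(z_0),\xi(z_0)\bigr)$; this involves $d^{n_i+1}u(z_0)$, which is precisely the first \emph{non}-vanishing jet of $u$ (it is the principal jet), so your claim that ``one derivative still hits the factor $du$'' does not kill it, and \eqref{eq:CD} does not apply to it either since the torsion coefficient $T_{x_0}$ need not vanish. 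Note that the paper's own input \eqref{eq:nablakT} is stated only for $0\le\ell\le n_i-1$, and in the only place the lemma is used (the integration by parts against $P_i(\del/\del s,\del/\del t)\delta_{z_i}$ with $\deg P_i\le n_i-1$ in section \ref{sec:removal}) only derivatives of order $\le n_i-1$ are taken. So either restrict your conclusion for the torsion part of $F$ to $0\le k\le n_i-1$ (which suffices for everything downstream), or supply a genuinely new argument for why $T^{(1,0)}_{x_0}(d^{n_i+1}u(z_0),\cdot)$ vanishes; as it stands, that term is not disposed of by the two vanishing inputs you invoke.
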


Let $z$ be a complex coordinate centered at $z_0$ and  $(w_1,
\cdots, w_n)$ be the complex coordinates on $M$ regarded as
coordinates on a neighborhood of $u(z_0)$. We consider the standard
metric
$$
h = \frac{\sqrt{-1}}{2} dz d\bar z
$$
on a neighborhood $U$ of $z_0$ and with respect to the coordinates
$(w_1,...,w_n)$ we fix any Hermitian metric on $\C^n$.

We fix complex coordinates
satisfying \eqref{eq:complexatp0} at each $z_i$ and denote by
$\delbar$ the Dolbeault differential with respect to the complex coordinates
on the corresponding coordinate neighborhoods respectively.
We fix cut-off functions $\chi_i$ whose support
$\supp \chi_i$ is contained in $D_i$ a neighborhood of
$z_i$ respectively. We also assume that $D_i$'s are disjoint from one another.

By multiplying a cut-off function $\chi = \sum_i\chi_i$ to $\xi$,
the map $\delbar_\chi$ defined by
$$
\delbar_\chi(\xi) := \sum_i \delbar (\chi_i \xi)
$$
gives rise to a well-defined continuous operator from
$\Omega^0_{\vec n+\vec 1;\vec z}(u^*TM)$ to $\Omega^{(0,1)}_{\vec n;\vec z}$.

The following proposition will be crucial in our proof. Here our
choice of the above particular mixed Sobolev norm enters in the proof
in a crucial way similar as in the proof of Lemma 2.5 \cite{oh-zhu}.

\begin{prop} Let $\eta \in \left(\Omega^{(0,1)}_{\vec n;\vec z}(u^*TM)\right)^*$
be the distribution on $\Sigma$ obtained above.
Then for any smooth section $\xi$ of $u^*(TM)$, we have
$$
\langle D_u\delbar_{(j,J)}\xi , \eta \rangle = \langle \delbar_\chi \xi,
\eta \rangle.
$$
\end{prop}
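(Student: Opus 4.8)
The claim is that for the distribution $\eta$ supported on $\{z_1,\dots,z_k\}$, testing against $D_u\delbar_{(j,J)}\xi$ agrees with testing against $\delbar_\chi\xi = \sum_i \delbar(\chi_i\xi)$, for every smooth $\xi$. The idea is that the difference $D_u\delbar_{(j,J)}\xi - \delbar_\chi\xi$, evaluated near each $z_i$, involves only the lower-order terms $E\cdot\del\xi$ and $F\cdot\xi$ together with the ``error'' $\delbar\xi - \delbar(\chi_i\xi)$ coming from differentiating the cut-off, and all of these vanish to sufficiently high order at $z_i$ — high enough that pairing them against a distribution supported at the points $z_i$ gives zero.

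First I would localize: since $\supp\eta\subset\{z_1,\dots,z_k\}$ and the $D_i$ are disjoint, I may write $\eta = \sum_i\eta_i$ with $\supp\eta_i\subset D_i$, and it suffices to treat each $z_i$ separately, replacing $\xi$ by $\chi_i\xi$ on a neighborhood of $z_i$ (where $\chi_i\equiv 1$ on $V_i\ni z_i$). On $V_i$ we have $\delbar_\chi\xi = \delbar(\chi_i\xi) = \delbar\xi$, and by the formula established just above, $D_u\delbar_{(j,J)}\xi = \delbar\xi + E\cdot\del\xi + F\cdot\xi$ in the chosen complex coordinates. Hence on $V_i$,
$$
D_u\delbar_{(j,J)}\xi - \delbar_\chi\xi = E\cdot\del\xi + F\cdot\xi .
$$
So the proposition reduces to showing $\langle E\cdot\del\xi + F\cdot\xi,\ \eta_i\rangle = 0$ for each $i$.

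Next I would invoke the structure theorem for distributions with point support: $\eta_i$, being a continuous functional on $\Omega^{(0,1)}_{\vec n;\vec z}(u^*TM)$ — equivalently on the local factor with $W^{n_i+1,p}$-regularity near $z_i$ — and supported at the single point $z_i$, must be a finite linear combination of derivatives $\sum_{|\alpha|\le n_i-1}c_\alpha\,\del^\alpha\delta_{z_i}$ of order at most $n_i-1$; this order bound is exactly what the mixed Sobolev norm $\|\cdot\|_{\vec n+\vec 1;\vec z}$ was engineered to produce (the Sobolev embedding forces the functional to be of order $\le (n_i+1) - 2 = n_i-1$), mirroring the role played by the function-space choice in Lemma 2.5 of \cite{oh-zhu}. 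Therefore $\langle E\cdot\del\xi + F\cdot\xi,\ \eta_i\rangle$ is a combination of $(n_i-1)$-st and lower derivatives of $E\cdot\del\xi$ and $F\cdot\xi$ at $z_i$. By Lemma \ref{lem:nablak}, since $j^{n_i}u(z_i) = 0$ we have $\nabla_{du}^m(E\cdot\xi)(z_i) = 0 = \nabla_{du}^m(F\cdot\xi)(z_i)$ for all $0\le m\le n_i$; the analogous vanishing holds for $E\cdot\del\xi$ because $\del\xi$ is again smooth and the same coefficient matrices $E = E(u(z))$ vanish with all derivatives up to order $n_i$ at $z_i$ (by the chain rule applied to $E(u(z))$ together with $E(u(z_i))=0$ and $j^{n_i}u(z_i)=0$, exactly as in the proof of Lemma \ref{lem:nablak}). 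Consequently all derivatives of $E\cdot\del\xi + F\cdot\xi$ up to order $n_i-1$ vanish at $z_i$, so $\langle E\cdot\del\xi + F\cdot\xi,\ \eta_i\rangle = 0$. Summing over $i$ gives $\langle D_u\delbar_{(j,J)}\xi,\eta\rangle = \langle\delbar_\chi\xi,\eta\rangle$.

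The main obstacle is the bookkeeping around function spaces: one must make sure that $\eta$, which a priori is only known to be a continuous functional on the mixed space $\Omega^{(0,1)}_{\vec n;\vec z}(u^*TM)$, really does restrict near each $z_i$ to a distribution of order $\le n_i-1$, so that the finitely many derivatives that survive the pairing are precisely the ones killed by the $j^{n_i}u(z_i)=0$ hypothesis via Lemma \ref{lem:nablak}. This is the crux — if the order of $\eta_i$ at $z_i$ were allowed to reach $n_i$ or beyond, the argument would fail, which is exactly why the weaker local regularity $W^{n_i+1,p}$ (rather than $W^{N,p}$) near each $z_i$ was introduced. Once this order count is pinned down, the vanishing is immediate from the chain rule and the established lemmas.
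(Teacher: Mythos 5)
Your proof is correct and follows essentially the same route as the paper's: localize via the cut-offs and the support condition, apply the structure theorem for point-supported distributions with the order bound $\leq n_i-1$ forced by the mixed Sobolev norm, and kill the resulting finitely many derivatives of $E\cdot\del\xi+F\cdot\xi$ at $z_i$ using the vanishing of $j^{n_i}u(z_i)$ via Lemma \ref{lem:nablak}. Your explicit remark that the vanishing must also be checked for $E\cdot\del\xi$ (not just $E\cdot\xi$) is a welcome clarification of a point the paper passes over quickly.
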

\begin{proof} We have already shown that
$\eta$ is a distribution with $\supp \eta \subset \{z_1,\cdots,z_k\}$. By the
structure theorem on the distribution supported at a point $z_0$
(see section 4.5, especially p. 119, of \cite{gelfand}), we have
\be\label{eq:eta=P}
\eta = \sum_i P_i\left(\frac{\del}{\del s}, \frac{\del}{\del t}\right)(\delta_{z_i})
\ee
where $z = s + it$ is the given complex coordinates at $z_i$ and
$P_i\left(\frac{\del}{\del s}, \frac{\del}{\del t}\right)$ is a differential
operator associated with the polynomial $P_i$ of two variables.
Furthermore since $\eta \in (W^{n_i,p})^*$, the degree of $P_i$ must be less
than equal to $n_i-1$ : This is because the `evaluation
at a point of the $n_i$-th derivative of $W^{n_i,p}$ map does not
define a continuous functional on $W^{n_i,p}$.

By multiplying a cut-off function $\chi = \sum_i\chi_i$ and using the
support condition on $\eta$, we have
$$
\langle D_u\delbar_{(j,J)}\xi , \eta \rangle =
\sum_i \langle D_u\delbar_{(j,J)}(\chi_i \xi) , \eta \rangle.
$$
Therefore to prove the lemma, it is enough to prove
$$
\langle D_u\delbar_{(j,J)}(\chi_i \xi) , \eta \rangle = \langle \delbar(\chi_i \xi), \eta \rangle
$$
for each $i$. We now recall
$$
D_u\delbar_{(j,J)}\xi = \delbar \xi + E \cdot \del \xi + F\cdot \xi
$$
in coordinates at $z_i$ where $E$ and $F$ are zero-order matrix operators with $E(z_i) = 0 = F(z_i)$
satisfying \eqref{eq:nablakExi}.
Therefore by \eqref{eq:eta=P}, we derive
$$
\langle E \cdot \del (\chi_i\xi) + F \cdot (\chi_i \xi), \eta \rangle
=  \left\langle E \cdot \del (\chi_i\xi)
+ F\cdot (\chi_i \xi), P_i\left(\frac{\del}{\del s}, \frac{\del}{\del t}\right) \delta_{z_i}\right \rangle.
$$
By writing out
$$
P_i\left(\frac{\del}{\del s},\frac{\del}{\del t}\right)
= \sum_{0 \leq a+b \leq n_i -1} \vec a_{a,b} \frac{\del^{a+b}}{\del s^a\del t^b},
$$
integrating by parts and then applying Lemma \ref{lem:nablak}, we obtain
\beastar
&{}& \langle E \cdot \del (\chi_i \xi) + F \cdot(\chi_i \xi), \eta \rangle \\
& = &
\sum_{0\leq a+b \leq n_i-1} (-1)^{a+b} \left\langle \frac{\del^{a+b}}{\del s^a\del t^b}
\left(E \cdot \del (\chi_i\xi) + F \cdot (\chi_i \xi)\right), \vec a_{a,b} \delta_{z_i} \right\rangle \\
& = & \sum_{0 \leq a+b \leq n_1-1} (-1)^{a+b} \left(
\frac{\del^{a+b}}{\del s^a\del t^b} \left(E \cdot \del (\chi\xi) + F
\cdot (\chi\xi)\right)(z_0), \vec a_{a,b}\right)_{z_i} = 0. \eeastar
Therefore we obtain
$$
\langle D_u\delbar_{(j,J)}\xi , \eta \rangle = \langle \delbar_\chi \xi + E \cdot \del \xi + F\cdot \xi,
\eta \rangle = \langle \delbar_\chi \xi, \eta \rangle
$$
which finishes the proof.
\end{proof}

This lemma then implies that \eqref{eq:Tgt} is equivalent to
\be\label{eq:coker-simple}
\langle \delbar_\chi \xi, \eta \rangle + \sum_{i=1}^k\sum_{\ell=1}^{n_i}
\langle \del^{\ell} \xi, \delta_{z_i} \zeta_{i;\ell}
\rangle = 0 \quad
\mbox{for all $\xi$}.
\ee
Express  $\del^{\ell}\xi$ as \be\label{eq:delxiz}
\del^{\ell}\xi(z_i) = a_{i;\ell}(z)\, dz^{\otimes \ell} \ee in $D_i$
in coordinates with $a_{i;\ell}(z_i) \in \C^n$.
We decompose $\xi$ as
$$
\xi = \widetilde \xi(z) + \frac{1}{\ell !}\sum_i\sum_\ell \chi_i(z)
(z-z_i)^{\ell} a_{i;\ell}(z_i)
$$
by defining $\widetilde\xi$ by
$$
\widetilde \xi(z) = \xi(z) - \frac{1}{\ell !}\sum_i\sum_\ell
\chi_i(z) (z-z_i)^{\ell} a_{i;\ell}(z_i).
$$
Our choice of this
decomposition is dictated by the fact
\be \label{eq:delchi}
\del^{\ell}\left(\chi_i(z) (z-z_i)^{\ell} a_{i;\ell}(z_i)\right)
(z_i) = \ell ! \cdot a_{i;\ell}(z_i)\cdot dz^{\otimes \ell}. \ee Then
$\widetilde \xi$ is a smooth section on $\Sigma$, and satisfies
$$
\quad \del^{\ell} \widetilde \xi(z_i) = 0,
$$
and
$$
\delbar \widetilde \xi = \delbar{\xi} \quad \mbox{on $V_i$}
$$
for all $i, \, \ell$.
Therefore applying \eqref{eq:coker-simple} to $\widetilde \xi$ instead of $\xi$, we obtain
$$
\langle \delbar_\chi \widetilde \xi, \eta \rangle + \sum_{i,\ell} \langle \del^{\ell}
(\chi_i\widetilde \xi), \delta_{z_i} \zeta_{i;\ell} \rangle = 0.
$$
But we have
\be\label{eq:tildexi=xi}
\langle \delbar_\chi \widetilde \xi, \eta \rangle = \langle \delbar_\chi \xi, \eta \rangle
\ee
since $\delbar \widetilde \xi = \delbar{\xi}$ on $V_i$ and $\supp \eta \subset \{z_1, \cdots, z_k\}$.
Again using the support property $\supp \eta \subset \{z_1, \cdots, z_k\}$
and \eqref{eq:delxiz}, \eqref{eq:delchi},  we derive
\bea\label{eq:delxi-adz}
\langle \del^{\ell} \widetilde \xi, \delta_{z_i} \zeta_{i,\ell}\rangle & = &
\langle \del^{\ell}\xi, \delta_{z_i} \zeta_{i;\ell} \rangle -
\langle \del^{\ell}(\chi(z)(z-z_i)^{\ell} a_{i;\ell}(z_i)),
\delta_{z_i}\zeta_{i;\ell} \rangle \nonumber\\
& = & (\del^{\ell}\xi,\zeta_{i;\ell})_{z_i} - (a_{i;\ell}(z_i)dz^{\otimes \ell}, \zeta_{i;\ell})_{z_i} \nonumber\\
& = & (\del^{\ell}\xi(z_i)- a_{i;\ell}(z_i)dz^{\otimes \ell},\zeta_{i;\ell})_{z_i} = 0
\eea
where the equality next to the last comes from \eqref{eq:delxiz}.
Substituting \eqref{eq:tildexi=xi} and
\eqref{eq:delxi-adz} into \eqref{eq:coker-simple}, we obtain
$
\langle \delbar_\chi \xi, \eta \rangle = 0
$
and hence \eqref{eq:tildexi} follows.

Since \eqref{eq:tildexi} holds for all $\xi$, we have proved that
$\eta$ is a distributional solution of $(D_u\delbar_{j,J})^\dagger \eta = 0$ on
$\Sigma$. This finishes the proof of the first part of the lemma.

This then implies that $\eta$ extends continuously
at $z_i$ if $\eta \in \Omega^{(0,1)}_{\vec n;\vec z,p}(u^*TM)$.
Hence we have proved $\eta \equiv 0$ since we already know that $\eta = 0$
on $\Sigma \setminus\{z_0, \cdots, z_k\}$.

\section{Stratawise transversality and finiteness of ramification profiles}
\label{sec:finiteness}

In this section, we apply the stratawise transversality result to prove
a finiteness result on the types of singularities of $J$-holomorphic
maps $u : \Sigma \to M$ with fixed homology class
$f_*[\Sigma] = \beta \in H_2(M,\Z)$. The case $n=1$ corresponds to
the case where both domain and target are Riemann surfaces.
In this case, finiteness of ramification profiles follows from
the classical Hurewitz formula. Therefore we will assume $n \geq 2$ in this section.

\begin{defn}
Let $u \in \CM_{g,k}(M,J;\beta, \vec n)$.
We call the pair $(k; \vec n)$ with $k \in \N$ and $\vec n \in \N^k$
the \emph{ramification profile} of $J$-holomorphic map $u$.
\end{defn}

We have two kinds of immediate predecessors $(k';\vec n')$ to $(k;\vec n)$.
\begin{enumerate}
\item[(a)] $(k; \vec n') = (k; \vec n + \vec e_\ell)$ for some $1 \leq \ell \leq k$
where we denote $\vec n + \vec e_\ell$ the decoration
$$
(n_1, \cdots, n_\ell + 1, \cdots, n_k).
$$
\item[(b)] $(k'; \vec n') = (k+1, \vec n \cup \{n_{k+1}\})$
with $n_{k+1} = 1$.
\end{enumerate}

We have already proved that if $J \in \CJ_\omega^{ram}$, each
moduli space $\CM_{g,k}(M,J;\beta, \vec n)$ is a smooth manifold itself.

The following is an immediate consequence of Theorem \ref{thm:main}, which relates two moduli
spaces right next to each other in the partial order $<$.

\begin{thm}\label{thm:stratawise} For $J \in \JJ_\omega^{ram}$ and $\beta \in H_2(M)$ and $g \in \N$,
the following holds :
\begin{enumerate}
\item For the type $(a)$ of the immediate predecessor of $(k';\vec n')
= (k,\vec n + \vec e_\ell)$ for some $\ell = 1, \cdots, k$, $\widetilde
\MM_{g,k}(J;\beta, \vec n + \vec e_\ell)$ is a smooth submanifold of
$\widetilde \MM_{g,k}(J;\beta, \vec n)$ with its dimension $2n$ smaller,
\item For the type $(b)$, the forgetful map
$\MM_{g,k+1}(J;\beta,\vec n + \vec e_{k+1}) \to \MM_{g,k}(J;\beta,\vec n)$
is an embedding of codimension $2(n-1)$.
\end{enumerate}
\end{thm}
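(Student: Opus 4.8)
The plan is to deduce Theorem \ref{thm:stratawise} from Theorem \ref{thm:main} by unwinding what the two types of immediate predecessors mean in terms of the vanishing conditions defining the moduli spaces, and then applying the transversality of $\Upsilon_k^{\vec n}$ in each case.

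For case (a), I would observe that $\widetilde\MM_{g,k}(J;\beta,\vec n+\vec e_\ell)$ sits inside $\widetilde\MM_{g,k}(J;\beta,\vec n)$ simply as the zero locus of one additional holomorphic jet component, namely the $(n_\ell+1)$-st holomorphic jet $\sigma^{n_\ell+1}(J,(j,u),z_\ell)$, which takes values in the $2n$-dimensional space $H^{(n_\ell+1,0)}_{(J,(j,u),z_\ell)}$. Concretely, consider the section $\Upsilon_k^{\vec n+\vec e_\ell}$ which augments $\Upsilon_k^{\vec n}$ by this extra component; Theorem \ref{thm:main} applied to the profile $(k;\vec n+\vec e_\ell)$ says this section is transverse to the zero section, and restricting attention to a fixed generic $J\in\JJ_\omega^{ram}$ (a regular value of the projection $\pi$, using the definition of $\JJ_\omega^{ram}$ as the intersection $\bigcap\CJ_\omega^{g,k,\vec n}$), the fiber $\widetilde\MM_{g,k}(J;\beta,\vec n+\vec e_\ell)$ is cut out transversally inside $\widetilde\MM_{g,k}(J;\beta,\vec n)$. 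The only subtlety is that one must check the extra equation $\sigma^{n_\ell+1}=0$ is independent of the equations already imposed, i.e. that the partial linearization in the last jet direction is still onto; but this is exactly the order-by-order surjectivity of $\xi\mapsto\del^{n_\ell+1}\xi(z_\ell)$ established inside the proof of Proposition \ref{prop:b=v=0} (the holomorphic jets of smooth $\xi$ of different orders can be prescribed freely and separately). Hence the codimension is $\dim H^{(n_\ell+1,0)}=2n$, which is the claimed drop.

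For case (b), the point is that imposing a brand new ramification point at $z_{k+1}$ with $n_{k+1}=1$ means imposing $j^1u(z_{k+1})=0$, i.e. $\sigma^1(J,(j,u),z_{k+1})=0$ in the $2n$-dimensional space $H^{(1,0)}$, but now the new marked point $z_{k+1}$ is a free parameter ranging over $\Sigma$. So I would compare $\widetilde\MM_{g,k+1}(J;\beta,\vec n\cup\{1\})$ (which lives over $\CF_{k+1}$, with the extra $2$-dimensional factor $T_{z_{k+1}}\Sigma$) with $\widetilde\MM_{g,k}(J;\beta,\vec n)$ (over $\CF_k$) via the forgetful map that drops $z_{k+1}$. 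By Theorem \ref{thm:main} the codimension of $\widetilde\MM_{g,k+1}(J;\beta,\vec n\cup\{1\})$ inside $\widetilde\MM_{g,k+1}(J;\beta)$ is $\sum_{i=1}^k 2nn_i + 2n\cdot 1$, while $\widetilde\MM_{g,k+1}(J;\beta)$ has dimension $2$ more than $\widetilde\MM_{g,k}(J;\beta)$; subtracting, the image under the forgetful map has codimension $2n-2=2(n-1)$, provided the forgetful map is an embedding onto its image. The embedding claim needs an argument: on the somewhere-injective locus, once a $J$-holomorphic curve $(j,u)$ in the target stratum is fixed, the ramification point $z_{k+1}$ is determined (not merely constrained) — a new ramification point of $u$ beyond the marked ones $z_1,\dots,z_k$ — and since a somewhere injective $J$-holomorphic curve has only finitely many critical points and the forgetful map restricted to this stratum is injective and immersive, it is an embedding onto its image. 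This last verification — that the forgetful map on this particular stratum is injective with injective differential, so that "embedding" is justified rather than just "image has the right codimension" — is the step I expect to require the most care, since it is a statement about the geometry of the moduli space rather than a formal consequence of the Fredholm transversality, and one must invoke the finiteness of critical points from \cite{mcduff}, \cite{sikorav} together with the somewhere-injectivity hypothesis built into the notation.

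A clean way to package both cases is: first restate that $\widetilde\MM_{g,k}(M,\beta;\vec n')=(\Upsilon_k^{\vec n'})^{-1}(\text{zero section})$ for any profile $\vec n'$ and that for $J\in\JJ_\omega^{ram}$ these are transversal fibers; second, in case (a) identify $\Upsilon_k^{\vec n+\vec e_\ell}$ as $\Upsilon_k^{\vec n}$ plus one more $2n$-dimensional component and note the inclusion of zero loci, concluding the codimension-$2n$ submanifold statement directly; third, in case (b) do the dimension bookkeeping above and then separately argue the forgetful map is an embedding. I would keep the computation minimal and point to the dimension formula \eqref{eq:index} and to Proposition \ref{prop:b=v=0} for all the analytic input, so that the proof of Theorem \ref{thm:stratawise} itself is essentially a short bookkeeping argument plus the embedding lemma.
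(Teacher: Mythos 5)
Your proposal is correct and, for case (a), follows exactly the paper's route: the inclusion $\widetilde\MM_{g,k}(M,J;\beta;\vec n+\vec e_\ell)\subset\widetilde\MM_{g,k}(M,J;\beta;\vec n)$ holds by definition, and Theorem \ref{thm:main} applied to both profiles (with $J$ a common regular value, which is why $\JJ_\omega^{ram}$ is taken as the countable intersection of the $\CJ_\omega^{g,k,\vec n}$) gives two smooth manifolds whose codimensions in $\widetilde\MM_{g,k}(M,J;\beta)$ differ by $\dim H^{(n_\ell+1,0)}=2n$; your extra remark that the order-by-order surjectivity from Proposition \ref{prop:b=v=0} makes the new jet equation independent of the old ones is exactly the content the paper compresses into ``Theorem 4.2 implies.'' For case (b) you actually do more than the paper, which simply refers the reader to the 1-jet transversality argument of \cite{oh-zhu}; your dimension bookkeeping ($+2$ for the free new marked point, $-2n$ for the new first-jet condition, net codimension $2(n-1)$ of the image) is the intended computation. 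One caveat: your claim that the new ramification point $z_{k+1}$ is \emph{determined} by $(j,u,z_1,\dots,z_k)$ is too strong globally --- a curve in the target stratum may have several unmarked critical points, so the forgetful map is in general only finite-to-one, and ``embedding'' should be understood locally (an immersion with locally injective differential, using that critical points of a somewhere injective $J$-holomorphic curve are isolated and finite in number by \cite{mcduff}, \cite{sikorav}). This looseness is present in the paper's own statement as well (the introduction says the forgetful map ``induces an embedding'' of its image), so it is a point to phrase carefully rather than a defect of your argument.
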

\begin{proof} We start with the case (1).
By definition, we have
$$
\widetilde \CM_{g,k}(M,J;\beta;\vec n + \vec e_\ell) \subset
\widetilde \CM_{g,k}(M,J;\beta;\vec n).
$$
Since we have chosen $J \in \CJ^{ram}_\omega$, Theorem 4.2 implies that
both $\widetilde \CM_{g,k}(M,J;\beta;\vec n)$ and
$\widetilde \CM_{g,k}(M,J;\beta;\vec n + \vec e_\ell)$ are smooth manifolds
and the latter has codimension $2n$ in the former.

The case of immediate predecessor of the type (2) essentially follows
from the proof of $1$-jet evaluation transversality
result of \cite{oh-zhu} (see section 2 \cite{oh-zhu} more specifically).
This finishes the proof.
\end{proof}

%
%

Another immediate consequence of Theorem \ref{thm:main} is the following finiteness result.

\begin{thm}\label{thm:finite} Let $\beta \in H_2(M,\Z)$ and $g$ be given.
Then for any $J \in \CJ_\omega^{ram}$,  the number of
types of ramification profiles is not bigger than
$$
P(c_1(\b)+(3-n)(g-1)),
$$
that is, the number of partitions of the integer
$c_1(\b)+(3-n)(g-1)$.
\end{thm}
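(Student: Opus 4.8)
The plan is to use the dimension formula from Theorem~\ref{thm:main} together with the fact that, for $J \in \CJ_\omega^{ram}$, a non-empty stratum must have non-negative dimension. First I would observe that if $\CM_{g,k}(M,J;\beta,\vec n)$ is non-empty, then by Theorem~\ref{thm:main} (more precisely its Corollary) it is a smooth manifold of dimension
$$
\dim \widetilde \CM_{g,k}(M,J;\beta) - \sum_{i=1}^k 2n\, n_i - \dim \Aut(\Sigma),
$$
and using the index formula \eqref{eq:index} this equals (for the generic stable range, $g \geq 2$; the cases $g = 0, 1$ are handled by the obvious modifications of the same inequality)
$$
2\big(c_1(\beta) + (n-3)(1-g)\big) + 2k - \sum_{i=1}^k 2n\, n_i - \dim\Aut(\Sigma).
$$
Non-emptiness forces this to be $\geq 0$. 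Since $\dim\Aut(\Sigma) \geq 0$ for $g \geq 2$ it can only help the inequality; dropping it and dividing by $2$ gives
$$
\sum_{i=1}^k (n\, n_i - 1) \;\leq\; c_1(\beta) + (n-3)(1-g) \;=\; c_1(\beta) + (3-n)(g-1).
$$

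Next I would extract from this the combinatorial bound. Since $n \geq 2$, for every $i$ we have $n\, n_i - 1 \geq n_i$ (indeed $n\,n_i - 1 \geq 2n_i - 1 \geq n_i$ because $n_i \geq 1$), hence
$$
\sum_{i=1}^k n_i \;\leq\; \sum_{i=1}^k (n\, n_i - 1) \;\leq\; c_1(\beta) + (3-n)(g-1).
$$
Thus any ramification profile $(k;\vec n)$ that is realized by some $J$-holomorphic curve in class $\beta$ of genus $g$ satisfies $\sum_i n_i \leq c_1(\beta) + (3-n)(g-1) =: N_0$. Now a ramification profile, up to reordering of the marked points (which does not change the profile as an unordered decoration since by definition it is the vector $\vec n \in \coprod_m \N^m$ recording the ramification degrees), is exactly the data of a partition: the multiset $\{n_1,\dots,n_k\}$ of positive integers with $\sum n_i \leq N_0$. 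The number of such multisets is at most the number of partitions of integers $\leq N_0$; but since one may think of it as a partition of $N_0$ itself (padding with parts of size zero is not allowed, but monotonicity of $P$ together with $\sum_{m \le N_0} P(m) \le P(N_0 + 1)$-type estimates, or more simply the bound that every partition of $m \le N_0$ injects into partitions of $N_0$ by adjoining a single part $N_0 - m$ when $m < N_0$) one gets the count $P(N_0)$ claimed, after checking the small edge cases. I would streamline this by just noting that the number of partitions with parts summing to \emph{at most} $N_0$ is bounded by $P(N_0 + 1)$ if one prefers, but the statement as written follows from the cleaner observation that each profile with $\sum n_i = m \le N_0$ corresponds to a partition of $m$, and $\sum_{m=0}^{N_0} P(m) \le P(N_0)$ fails in general — so instead I would phrase the theorem's bound via: the profiles inject into partitions of $N_0$ by the map $\{n_1,\dots,n_k\} \mapsto \{n_1,\dots,n_k\} \cup \{N_0 - \sum n_i\}$ whenever the defect is positive, and directly otherwise.

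The main obstacle is bookkeeping rather than conceptual: one must (i) correctly incorporate the $\Aut(\Sigma)$-quotient and the low-genus corrections to the index formula \eqref{eq:index} so that the inequality $\sum_i(n\,n_i - 1) \le c_1(\beta) + (3-n)(g-1)$ is valid in all three genus regimes, and (ii) phrase the final combinatorial step so that the count is genuinely $P(c_1(\beta)+(3-n)(g-1))$ and not merely $O(P(\cdot))$ — this requires the clean injection of the set of realized profiles into the set of partitions of the fixed integer $N_0$, using that $n \geq 2$ so that $n\,n_i - 1 \geq n_i$ with equality impossible unless one is careful, which is exactly why the hypothesis $n \geq 2$ was isolated at the start of the section. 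Once these are in place, the theorem is immediate, and the remark that the moduli space is empty when $c_1(\beta)+(3-n)(g-1) - n < 0$ follows from the same dimension inequality applied to the single profile $(1;(1))$.
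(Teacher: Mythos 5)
Your first half is exactly the paper's argument: use Theorem \ref{thm:main} together with the index formula \eqref{eq:index} to compute the dimension of the quotient $\CM_{g,k}(M,J;\beta;\vec n)$, and conclude from non-emptiness that
$$
\sum_{i=1}^k (n\,n_i-1)\ \le\ c_1(\b)+(3-n)(g-1)=:N_0,
$$
which is precisely the paper's inequality \eqref{eq:nell}, derived there by the same dimension count on the universal moduli space; the role of the hypothesis $n\ge 2$ (so that each summand is positive) is also the same.

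The combinatorial endgame, however, has a genuine flaw as you have written it. You first weaken $\sum_i(n\,n_i-1)\le N_0$ to $\sum_i n_i\le N_0$, discarding the factor $n$, which is the only extra leverage the inequality provides beyond ``bounded sum.'' Then the injection you propose, $\{n_1,\dots,n_k\}\mapsto\{n_1,\dots,n_k\}\cup\{N_0-\sum_i n_i\}$, is not injective: for $N_0=3$ the profiles $\{1\}$ and $\{1,2\}$ both map to the partition $\{1,2\}$, and in general any profile whose parts already sum to $N_0$ collides with the profile obtained from it by deleting one part. So your argument only bounds the count by the number of partitions of all integers $m\le N_0$, which (as you yourself note) exceeds $P(N_0)$ in general. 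To be fair, the paper is equally terse here --- it passes from \eqref{eq:nell} directly to the bound $P(N_0)$ without exhibiting an injection --- but your attempt to make the step explicit supplies a map that demonstrably fails. The natural repair is to keep the un-weakened inequality and send a profile to the multiset $\{n\,n_1-1,\dots,n\,n_k-1\}$, which \emph{is} injective since $n$ is fixed and lands in partitions of integers $\le N_0$ with all parts $\equiv -1 \pmod n$; one must then still check that this restricted class has cardinality at most $P(N_0)$, which is immediate for $n\ge 3$ (pad with parts equal to $1$, which cannot be confused with parts $n\,n_i-1\ge 2$) but requires separate care for $n=2$.
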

\begin{proof}
We have the natural projection
$$
\pi:\widetilde \CM_{g,k}(M,\beta;\vec n) : = \bigcup_{J \in \CJ_\omega}
\widetilde \CM_{g,k}(M,J;\beta;\vec n) \to \CJ_\omega.
$$
The projection has index
$$
2(c_1(\beta) + n(1-g)) + 2k - \sum_{\ell}^k 2n n_\ell =
2(c_1(\beta) + n(1-g)) - \sum_{\ell}^k 2 (n n_\ell-1)
$$ so for any regular value $J$, the moduli space
$$
\widetilde \CM_{g,k}(M,J;\beta;\vec n) =\Upsilon_\ell^{-1} (o_{\CH''} \times
o_{H^{(n_\ell,0)}} )\cap \pi^{-1}(J)
$$
is of dimension
$$
2(c_1(\beta) + n(1-g)) - \sum_{\ell}^k 2 (n n_\ell-1).
$$

We consider the quotient
$$
\CM_{g,k}(M,J;\beta;\vec n)
:=\widetilde \CM_{g,k}(M,J;\beta;\vec n)/\Aut(\Sigma),
$$
where $Aut(\Sigma)$ acts on marked Riemann surfaces $(\S,j,z)$ by
conformal equivalence then on the maps from them.
For any $J \in \CJ_\omega^{ram}$, $\CM_{g,k}(M,J;\beta;\vec n)$ has its dimension
$$
2 \left(c_1(\b)+(3-n)(g-1) - \sum_{\ell}^k (n n_\ell-1)\right)
$$
as a smooth orbifold. Therefore, $\CM_{g,k}(M,J;\beta;\vec n)$ is
empty whenever this dimension is negative. In other words, if
$\CM_{g,k}(M,J;\beta;\vec n) \neq \emptyset$, then we should have
\be\label{eq:nell} \sum_{\ell}^k (n n_\ell-1) \leq
c_1(\b)+(3-n)(g-1). \ee In particular if $c_1(\b)+(3-n)(g-1) \leq
0$, then for any element $(j,u) \in \CM_{g,k}(M,J;\beta)$, $u$ will
be immersed.

On the other hand, We note that since we assume $n \geq 2$, $nn_\ell
-1 \geq n-1 > 0$. Therefore if $c_1(\b)+(3-n)(g-1) > 0$, the
inequality \eqref{eq:nell} implies that the number of admissible
pairs $(k;\vec n)$ is not bigger than
$$
P(c_1(\b)+(3-n)(g-1)),
$$
that is, the number of partitions of the integer $c_1(\b)+(3-n)(g-1)
$. This then finishes the proof of Theorem \ref{thm:finite}.
\end{proof}

\end{document}